\documentclass[11pt,a4paper,leqno]{amsart}

\usepackage{amsmath,amssymb,amsfonts,amsthm,enumitem}
\setlist[enumerate]{label=(\roman*)}
\usepackage{hyperref}
\hypersetup{
	colorlinks = true,
	linkcolor = black,
	citecolor = black
}
\usepackage{tikz}
\usepackage{graphicx}
\usepackage{microtype}
\usetikzlibrary{arrows.meta}

\newtheorem{thm}{Theorem}[section]
\newtheorem{lemma}[thm]{Lemma}
\newtheorem{prop}[thm]{Proposition}
\newtheorem{cor}[thm]{Corollary}
\newtheorem{notation}[thm]{Notation}
\theoremstyle{definition}
\newtheorem{df}[thm]{Definition}

\theoremstyle{remark}
\newtheorem{rem}[thm]{Remark}
\numberwithin{equation}{section}
\theoremstyle{plain}
\newtheorem{question}{Question}
\newcounter{theoremintro}
\newtheorem{thmx}[theoremintro]{Theorem}
\newtheorem{corx}[theoremintro]{Corollary}

\DeclareMathOperator{\diag}{diag}

\newcommand{\Nb}{\mathbb{N}}
\newcommand{\Zb}{\mathbb{Z}}

\newcommand{\Cb}{\mathbb{C}}

\newcommand{\Rb}{\mathbb{R}}
\newcommand{\Zc}{\mathcal{Z}}

\newcommand{\cU}{\mathcal{U}}

\newcommand{\tr}{\mathrm{tr}}

\DeclareMathOperator{\Astar}{A*}

\allowdisplaybreaks

\begin{document}

\begin{abstract}
We prove that strictly ergodic actions of countable groups with Ozawa's property PHP on the Cantor set with dynamical comparison have completely selfless reduced crossed products. We deduce that these crossed products have stable rank one, strict comparison, a unique quasitracial state, and real rank zero. This covers actions of C$^*$-simple acylindrically hyperbolic and linear groups with an amenable subgroup whose restricted action is strictly ergodic and, in particular, actions in the class $\Astar(F_d,X)$ introduced by Bell, Geffen, and Kerr.
\end{abstract}

\keywords{C$^*$-algebra, crossed product, selfless}

\subjclass[2020]{Primary 46L80; Secondary 46L35, 46L55}
 
\title{Selfless nonnuclear crossed products}
\author{Jamie Bell}
\address[Jamie Bell]{Mathematical Institute, University of M\"unster, Einsteinstrasse 62, 48149 M\"unster, Germany.}
\email{jbell@uni-muenster.de}

\thanks{Funded by the Deutsche Forschungsgemeinschaft (DFG, German Research Foundation) under Germany's Excellence Strategy EXC 2044/2 –390685587, Mathematics M\"unster: Dynamics-Geometry-Structure and project-ID 427320536, SFB 1442, of the DFG}

\maketitle

\section{Introduction}

Three fundamental properties of a $\mathrm{II}_1$ factor are that (i) Murray--von Neumann comparison of projections is determined by its unique trace, (ii) every element may be approximated by an invertible one (using unitary polar decompositions), and (iii) every self-adjoint element may be approximated by an invertible self-adjoint element (by Borel functional calculus). In the realm of unital simple finite C$^*$-algebras, none of these three properties is automatic and understanding the conditions under which they hold is a central problem in the structure theory of C$^*$-algebras. 

The approximation properties in (ii) and (iii) are precisely \emph{stable rank one} and \emph{real rank zero}, respectively, as introduced by Rieffel \cite{Rie83} and Brown and Pedersen \cite{BroPed91}. The analogue of (i), known as \emph{strict comparison}, requires passing from Murray--von Neumann equivalence of projections to Cuntz equivalence of positive elements. For unital simple nuclear C$^*$-algebras, these properties are particularly well-understood. Strict comparison is tightly related to the central regularity hypothesis of $\Zc$-stability which undergirds the many successes of the Elliott classification programme. Indeed, the Toms--Winter conjecture predicts that these two properties are equivalent among nonelementary simple separable nuclear C$^*$-algebras. R{\o}rdam \cite{Ror04} proved one implication, showing that every unital simple finite $\Zc$-stable C$^*$-algebra has strict comparison and stable rank one, as well as providing a simple criterion for establishing real rank zero.  

Over the past couple of decades, significant effort has been expended addressing the question of when $\Zc$-stability holds for unital simple nuclear C$^*$-algebras. One natural setting is that of crossed products arising from topological actions. Kerr's notion of \emph{almost finiteness} \cite{Ker20}, modelled after an earlier notion introduced by Matui \cite{Mat12}, leads to the most definitive results, implying $\Zc$-stability for free minimal actions of infinite groups. Over the Cantor set, almost finiteness is equivalent to a related notion called \emph{(dynamical) comparison}, which may be thought of as a dynamical analogue of strict comparison. Roughly speaking, it asks that if one set is smaller than another with respect to all invariant measures on the space, then the first may be disassembled and translated disjointly via group elements into the other set. For actions of amenable groups on the Cantor set, comparison is now known to be automatic \cite{GlaLiu26}. Combining the theorems of Kerr and R{\o}rdam, it follows that their crossed products have strict comparison, stable rank one and real rank zero (cf.~\cite{AraBonBosLi20}).  

Given the definitive results we have in the nuclear regime, it is natural to wonder which of these properties continue to be present for simple nonnuclear C$^*$-algebras. A first step was made by Dykema, Haagerup, and R{\o}rdam \cite{DykHaaRor97} who proved that many reduced free products, including the reduced C$^*$-algebras of free groups $C^*_\lambda(F_d)$, have stable rank one. They are moreover simple by Powers' classical result \cite{Pow75}. Their basic methodology, which relied on a rapid-decay-type estimate to control the operator norm in terms of the $2$-norm, was successfully pushed to encompass a whole host of other C$^*$-simple groups. Pimsner and Voiculescu famously proved that $C^*_\lambda(F_d)$ is projectionless, and so very far from having real rank zero. However, establishing strict comparison for $C^*_\lambda(F_d)$ remained a longstanding open problem until the work of Amrutam, Gao, Kunnawalkam Elayavalli, and Patchell \cite{AmrGaoKunPat25}. 

Their proof establishes \emph{selflessness}, a property introduced by Robert \cite{Rob25}. This is not unrelated to the work of Dykema, Haagerup, and R{\o}rdam on reduced free products. Indeed, a tracial C$^*$-algebra $(A,\tau)$ is selfless precisely when the diagonal embedding into its norm ultrapower can be approximately factored through a nontrivial reduced free product $(A,\tau) * (C,\psi)$. This allows to pull back regularity properties of infinite reduced free products to $A$ itself. Robert used this to prove that selflessness implies simplicity, stable rank one, strict comparison and uniqueness of the quasitracial state. More recently,he established a converse: infinite-dimensional simple unital monotracial C$^*$algebras with strict comparison are selfless \cite{Rob26}. For this reason, one might profitably think of selflessness (or the closely related notion of pureness) as a veritable nonnuclear analogue of $\Zc$-stability -- at least in the simple monotracial setting. 

These developments have generated considerable activity in recent years, with selflessness being established for broad classes of reduced free products and graph products \cite{HayKunRob25,FloKliCobPag26}, amalgamated free products and HNN extensions \cite{GaoKunPatTer26}.  The influential work of Ozawa \cite{Oza25} supplies a further set of methods for establishing selflessness which remove the rapid decay hypotheses used in earlier works and several other applications. See \cite{HayKunPatRob25,FloKliCobPag26b,LeVLeiVig26,MaWanYan25,Vig25,Vig26} for further developments. 
  
Despite the rapid rate of progress, extending this picture from reduced group C$^*$-algebras to crossed products has proved particularly recalcitrant. A natural starting point is to consider strictly ergodic (i.e.\ minimal and uniquely ergodic) actions of free groups on the Cantor set. Their crossed products, while nonnuclear, are simple finite and monotracial, so fall cleanly within the selflessness framework. Moreover, for a free product $G=G_1*G_2$, there is a natural decomposition of the reduced crossed product
\[
 C(X)\rtimes_\lambda G  \cong (C(X)\rtimes_\lambda G_1)*_{C(X)}(C(X)\rtimes_\lambda G_2),
\]
as an amalgamated free product over $C(X)$. Despite this, existing criteria for selflessness do not apply to this situation. Gao, Junge, Kunnawalkam Elayavalli, Patchell \cite{GaoJunKunPatRob26} establish selflessness for approximately inner actions, while Ohshima \cite{Ohs26} treats almost periodic actions of groups admitting a topologically extreme boundary on simple exact monotracial C$^*$-algebras. However an action on a commutative C$^*$-algebra can never be approximately inner (unless it is trivial), and the only simple commutative C$^*$-algebra is $\Cb$. Hence, neither of these address the question of selflessness for actions on the Cantor set. 

The first progress on this question was made by the author in \cite{Bel26} where, using the special inductive-limit structure of crossed products of odometer actions, selflessness could be transferred from group C$^*$-algebras to nonnuclear Bunce--Deddens algebras. However this is a restrictive class of actions and it is clear that a new approach is required to push beyond this setting.  

One such new approach is explored in the author's joint work with Geffen and Kerr \cite{BelGefKer25} on stable rank one in simple nonnuclear crossed products. We show that a generic minimal action of $F_d$ on the Cantor set yields stable rank of its reduced crossed product for two natural classes of actions. The key insight was to apply methods from amenable dynamics, such as F{\o}lner tilings and Rokhlin tower decompositions, to extend stronger local forms of regularity to weaker forms at the global scale. This philosophy was pushed further in recent work on stable rank one for certain C$^*$-simple topological full groups \cite{KerPet26}. 

The approach in \cite{BelGefKer25}, being rooted in Baire category arguments, has some natural limitations and it is unclear how to extend the methods there to obtain stronger regularity properties like strict comparison. In the present paper we address this question directly, drawing a connection between dynamical comparison and selflessness, and revealing that the former property can be used just as profitably as it has been in the nuclear setting to establish regularity for simple nonnuclear crossed products. Our main result is the following. 

\begin{thmx}\label{T:PHP-main}
Let $G$ be a countable group with Ozawa's property PHP, and let $G\curvearrowright X$ be a
strictly ergodic Cantor action with dynamical comparison. Then $C(X)\rtimes_\lambda G$ is completely selfless. Consequently, the crossed product has stable rank one, strict comparison, a unique quasitracial state, and real rank zero.
\end{thmx}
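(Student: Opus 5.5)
Throughout, write $A=C(X)\rtimes_\lambda G$, let $\mu$ be the unique invariant probability measure, and let $\tau=\mu\circ E$ be the induced trace, where $E$ is the canonical conditional expectation onto $C(X)$. The plan is to verify Ozawa's criterion for (complete) selflessness in the form he uses for property PHP. That is, we aim to show that the diagonal embedding $A\to A^\cU$ approximately factors through a reduced free product with a Haar unitary, so it suffices to produce in $A^\cU$ a Haar unitary $u$ that is $\tau$-freely independent from $A$. Freeness is checked via vanishing of traces of alternating words $a_0u^{n_1}a_1\cdots u^{n_k}a_k$ with $\tau(a_i)=0$.

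For complete selflessness we additionally need this for matrix amplifications, or for hereditary corners $pAp$, and we would handle this by running the same argument with the action replaced by its restriction to clopen sets. Once complete selflessness is established, stable rank one, strict comparison and uniqueness of the quasitrace follow from Robert's theorem. Real rank zero should follow from strict comparison together with density of the projections coming from the Cantor set: traces of projections in $C(X)$ are dense in $[0,1]$, so one can apply the standard criterion that strict comparison, stable rank one and density of $K_0$ in the affine functions give real rank zero.

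\textbf{Step 1 (uniqueness of trace).} Since $\mu$ is unique and $G$ has PHP, hence is C$^*$-simple with a form of Powers averaging, every trace on $A$ restricts to $\mu$ on $C(X)$. PHP-averaging then kills the coefficients at nontrivial group elements, so $\tau$ is the unique trace. Minimality gives simplicity.

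\textbf{Step 2 (the free Haar unitary).} Following Ozawa, PHP provides group elements $g$ whose canonical unitaries $u_g$ behave asymptotically freely with respect to finite sets of group elements, via a ping-pong type estimate on the boundary. The obstacle is that $u_g$ does not commute with $C(X)$, and words such as $f_0u_gf_1u_g^*$ have trace $\int f_0\,(f_1\circ g^{-1})\,d\mu$, which need not vanish. To fix this we use dynamical comparison. Given a finite partition of $X$ into clopen sets, comparison (together with unique ergodicity and minimality) lets us build partial isometries in $C(X)\rtimes G$ that implement measure-preserving clopen rearrangements. Using these we construct a unitary $w$ in the normalizer of $C(X)$, acting as an arbitrarily fine ``Rokhlin-type'' approximate mixing permutation, such that $\tau(f_0\, wf_1w^*)\approx\tau(f_0)\tau(f_1)$ for the relevant finitely many $f_i$.

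Composing $w$ with PHP-type elements $u_g$ far out in $G$, we get a unitary $v$ for which mixed words involving $C(X)$ and finitely many $u_h$ have small trace. This asymptotic freeness should be proved by expanding each word and separating two kinds of terms: those whose group components reduce to the identity, controlled by the mixing of $w$, and those whose components do not reduce, controlled by PHP averaging. The Haar property of the element is obtained by functional calculus on the unitaries, arranged to have spectrum spread evenly over the circle. Passing to the ultrapower then gives $u$.

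\textbf{Main obstacle.} The hardest step is Step 2: combining the dynamical mixing obtained from comparison with Ozawa's PHP estimates, which are stated for group von Neumann and C$^*$-algebras. This needs uniform norm control, not just $2$-norm control, so that the factorization holds in the norm ultrapower. I would first try to establish a crossed-product version of Ozawa's PHP averaging lemma, with $C(X)$ coefficients and the error measured against $\mu$. I would then use comparison to replace each $C(X)$-coefficient with nonzero integral by a sum of elements of the form $(\text{trace-zero part}) + \text{scalar}$, with the trace-zero parts supported on pieces that can be translated disjointly, so that they become absorbable by the averaging.
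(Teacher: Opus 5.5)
Your proposal has a genuine gap in Step 2, and as described that step cannot work. The unitary $w$ you build from comparison lies in the normalizer of $C(X)$, and so does $v=wu_g$. Hence $vC(X)v^*=C(X)$, and likewise $vC(X)^\cU v^*=C(X)^\cU$, which is commutative. Such a $v$ cannot be asymptotically free from $C(X)$, even at the level of traces. Take real-valued $f_0,f_1\in C(X)$ with $\mu(f_0)=\mu(f_1)=0$. Commutativity gives $\tau(f_0vf_1v^*f_0vf_1v^*)=\tau(f_0^2\,vf_1^2v^*)$, and your own mixing property makes this close to $\|f_0\|_2^2\|f_1\|_2^2>0$. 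Freeness would force it to be $0$.

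For the same reason a normalizing unitary can never average in norm. Here $w^*fw=f\circ\varphi$ is only a rearrangement of $f$, so $\|f\circ\varphi-\mu(f)1\|=\|f-\mu(f)1\|$. Your mixing condition $\tau(f_0wf_1w^*)\approx\tau(f_0)\tau(f_1)$ is a $2$-norm statement in any case. Trace-freeness inside $A^\cU$ does not give an embedding of the reduced free product, because $\tau^\cU$ is not faithful on $A^\cU$. You flag this norm issue but supply no mechanism for it.

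Separately, complete selflessness is not a statement about matrix amplifications or corners $pAp$. It asks that the inclusion $A\subseteq(A,\tau)*(Q,\psi)$ be completely existential, i.e.\ compatible with $\otimes_{\min}C$ for every C$^*$-algebra $C$. Restricting the action to clopen sets does not address this.

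The missing idea is norm averaging (Definition~\ref{D:NA}). This is a unitary $w\in A$, necessarily not normalizing $C(X)$, with $\|E(w^*xw)-\tau(x)1\|<\varepsilon$ for finitely many $x$. The paper obtains it in three moves:
\begin{enumerate}
\item It recasts comparison as the existence of balanced clopen tower partitions (Proposition~\ref{P:comparison}).
\item It views each tower as a copy of $M_n(C(B))$ inside $A$.
\item It conjugates by Fourier-type matrices over the towers, so every value of $E$ becomes a tower average $\frac1n\sum_{s\in S}f(sy)\approx\mu(f)$. Off-diagonal entries are controlled by Lemma~\ref{L:matrix} (Proposition~\ref{P:averaging}).
\end{enumerate}

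Freeness and norm control are then never checked through traces of words. Instead they come from Ozawa's isometry criterion (Theorem~\ref{T:Ozawa-isometry}), which gives complete selflessness directly. The PHP operator $T$ of Lemma~\ref{L:group-PHP-operators} is an approximate isometry on $\ell^2(G)\otimes L^2(X,\mu)$. It satisfies $T^*du_gT=0$ for $g\neq e$ in a prescribed finite set, and $T+T^*$ is close to $A$. Setting $V=wTw^*$ gives $V^*xV=w\,T^*(w^*xw)T\,w^*\approx w\,T^*E(w^*xw)T\,w^*\approx\tau(x)1$ in norm.

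Your Step 1 is unnecessary, since the unique quasitrace follows from Robert's theorem once selflessness is known. Your real rank zero argument is the same as the paper's Proposition~\ref{P:rr0}.
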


Property PHP (Powers--Haagerup--Pisier) was introduced by Ozawa in \cite{Oza25}, who proved that it implies complete C$^*$-selflessness of its corresponding reduced group C$^*$-algebra. Thus, Theorem~\ref{T:PHP-main} may be seen as a crossed product generalisation of his result. This property is known to hold for all acylindrically hyperbolic groups with trivial finite radical \cite[Proposition 15]{Oza25}, all linear groups with trivial finite radical \cite{Vig26}, and all nonamenable free products. 

\begin{corx}\label{T:main}
Let $G=G_1*G_2$ be a nonamenable free product of two nontrivial groups, and let $G\curvearrowright X$ be a strictly ergodic Cantor action with dynamical comparison. Then $C(X)\rtimes_\lambda G$ is completely selfless.
\end{corx}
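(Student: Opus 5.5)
This is a direct consequence of Theorem~\ref{T:PHP-main}, so the plan is to check its hypothesis: every nonamenable free product $G=G_1*G_2$ of two nontrivial groups has Ozawa's property PHP. Granted that, Theorem~\ref{T:PHP-main} applied to the given strictly ergodic Cantor action with dynamical comparison yields complete selflessness of $C(X)\rtimes_\lambda G$ at once.

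To verify PHP I would not work from the definition. I would route through the criterion quoted after Theorem~\ref{T:PHP-main}: acylindrically hyperbolic groups with trivial finite radical have PHP \cite[Proposition 15]{Oza25}. This requires two facts.

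First, $G$ is acylindrically hyperbolic. A free product of two nontrivial groups acts on its Bass--Serre tree, and this action is acylindrical because edge stabilisers are trivial. The group is non-elementary precisely when it is not $\Zb/2*\Zb/2$, and that group is the infinite dihedral group, which is amenable. So nonamenability excludes the only elementary case, and $G$ acts acylindrically and non-elementarily on a hyperbolic space, namely the tree.

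Second, the finite radical of $G$ is trivial. A finite normal subgroup $N$ of $G$ fixes a vertex of the tree, since finite groups acting on trees have fixed points. By normality $N$ then fixes every $G$-translate of that vertex, so $N$ lies in a conjugate of each vertex group $gG_ig^{-1}$. Intersecting $G_1$ with a suitable conjugate of $G_1$, for instance by an element of $G_2\setminus\{e\}$, gives the trivial group by normal forms in the free product. Hence $N=\{e\}$.

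The only mildly delicate step is this finite-radical argument, but it is standard. Alternatively, one could cite directly that nonamenable free products have PHP, as stated after Theorem~\ref{T:PHP-main}, which makes the corollary immediate.
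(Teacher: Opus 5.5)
Your proposal is correct and follows the paper's route: it deduces the corollary from Theorem~\ref{T:PHP-main} by obtaining property PHP for $G$ from \cite[Proposition 15]{Oza25} via the Bass--Serre tree of $G_1*G_2$. The only difference is that the paper phrases this as the Bass--Serre action giving a topologically free extreme boundary, whereas you verify the other sufficient condition quoted in the introduction, namely acylindrical hyperbolicity with trivial finite radical. Your checks of that condition are sound: acylindricity follows from trivial edge stabilisers, nonamenability excludes only the elementary case $\Zb/2*\Zb/2$, and the finite radical is trivial because a finite normal subgroup fixes a vertex (the action is without inversions) and $G_1\cap bG_1b^{-1}=\{e\}$ for $b\in G_2\setminus\{e\}$.
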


The proof of Theorem~\ref{T:PHP-main} combines Ozawa's PHP methodology with a key new dynamical averaging property that is accessed via (dynamical) comparison. Indeed, we start by reformulating comparison for strictly ergodic actions on the Cantor set in terms of what we call \emph{balanced tower partitions} (cf.~Definition~\ref{D:balanced} and Proposition~\ref{P:comparison}). This condition asks that there is a partition of the Cantor set $X$ into clopen towers $(B,S)$ of a prescribed height such that, on some prescribed finite collection of functions in $C(X)$, we have 
\[
 \frac1{|S|}\sum_{s\in S}f(sy)\approx \int_X f\,d\mu , \quad y\in B
\]
uniformly over all the towers and the prescribed functions. The shapes $S$ are arbitrary subsets of the group and, in particular, do not impose amenability of the acting group (unlike almost finiteness and the related uniform Rokhlin property). 

Writing $A=C(X)\rtimes_\lambda G$ for the reduced crossed product, $E:A\to C(X)$ for the canonical conditional expectation, and $\tau=\mu\circ E$, we use these balanced tower partitions to obtain the following \emph{norm averaging} property (Corollary~\ref{C:comparison-NA}).
\begin{equation*}\tag{NA}\label{E:NA-intro}
 \forall F\subset A\text{ finite}\ \forall\varepsilon>0\quad
 \exists w\in\cU(A)\quad
 \max_{x\in F}\|E(w^*xw)-\tau(x)1\|<\varepsilon.
\end{equation*}

It is at this point that property PHP enters. Norm averaging provides enough control over the coefficients belonging to the image of the conditional expectation $E$ that it behaves like a group C$^*$-algebra with scalar coefficients. At this point, Ozawa's PHP argument can be adapted to deduce complete selflessness. 

There are large classes of actions to which the theorem can be applied. As mentioned earlier in the introduction, comparison is now known to be automatic for actions of infinite amenable groups on the Cantor set. These amenable results can also be used to deduce comparison for nonamenable actions. Indeed, if an action preserves a probability measure and has a strictly ergodic subgroup action with comparison, the subgroup's unique invariant measure is also the unique invariant
measure, and it follows that the action has comparison, with the subequivalence already being implemented by elements of the subgroup (we record this observation as Proposition~\ref{P:comparison}). We thus obtain the following consequence.

\begin{corx}\label{T:amenable}
Let $G$ be a countable group with property PHP. Suppose that $G\curvearrowright X$ preserves a probability measure and that some infinite amenable subgroup acts strictly ergodically on $X$. Then the reduced crossed product is completely selfless.
\end{corx}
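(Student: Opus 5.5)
The plan is to reduce Corollary~\ref{T:amenable} to Theorem~\ref{T:PHP-main}. That means checking that $G\curvearrowright X$ is strictly ergodic and has dynamical comparison. Let $H\le G$ be the infinite amenable subgroup acting strictly ergodically on $X$.

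First, strict ergodicity of the $G$-action. Minimality is immediate: every $G$-orbit contains an $H$-orbit, which is dense. For unique ergodicity, the hypothesis supplies a $G$-invariant probability measure $\nu$. Any $G$-invariant measure is in particular $H$-invariant, so it must equal the unique $H$-invariant measure $\mu$. Hence $\nu=\mu$ and $\mu$ is the unique $G$-invariant measure.

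Second, comparison for the $H$-action. Since $H$ is an infinite amenable group acting on the Cantor set, comparison is automatic by \cite{GlaLiu26}. The paper implicitly takes $X$ to be the Cantor set here, as in the setting of Theorem~\ref{T:PHP-main}.

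Third, we transfer comparison from $H$ to $G$; this is the observation recorded as Proposition~\ref{P:comparison}, which I would invoke directly. Concretely, let $A,B\subseteq X$ be clopen with $\nu(A)<\nu(B)$ for every $G$-invariant $\nu$. The only such measure is $\mu$, which is also the only $H$-invariant measure, so the hypothesis is exactly $\mu(A)<\mu(B)$, i.e.\ the $H$-comparison hypothesis. $H$-comparison then gives a clopen partition $A=\bigsqcup_i A_i$ and elements $h_i\in H\subseteq G$ with the sets $h_iA_i$ pairwise disjoint and contained in $B$. This is precisely $G$-subequivalence. If comparison is phrased with open sets or with $\mu(A)<\mu(B)$ required only for ergodic measures, the same identification of invariant measures applies verbatim.

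Finally, $G$ has property PHP by hypothesis, so Theorem~\ref{T:PHP-main} applies and yields complete selflessness of $C(X)\rtimes_\lambda G$, together with the listed consequences. The main potential obstacle is only the bookkeeping in the third step: one must make sure the definition of dynamical comparison used in Theorem~\ref{T:PHP-main} quantifies over $G$-invariant measures and is implied by the $H$-version. Both points follow from the equality of the invariant-measure simplices, which are each the singleton $\{\mu\}$.
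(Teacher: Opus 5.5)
Your proposal is correct and matches the paper's proof: the $H$-action has comparison by \cite{GlaLiu26}, strict ergodicity and comparison pass to $G$ via Proposition~\ref{P:subgroup-comparison} (which you reprove inline), and then Theorem~\ref{T:PHP-main} applies. One small correction: the observation you invoke is Proposition~\ref{P:subgroup-comparison}. Proposition~\ref{P:comparison} is the balanced-tower characterisation, even though the introduction cites that label for this observation.
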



\begin{corx}\label{T:free}
Let $d\ge 2$, and suppose that an action $F_d\curvearrowright X$ on the Cantor set preserves a probability measure. If some nonidentity element of $F_d$ acts strictly ergodically, then $C(X)\rtimes_\lambda F_d$ is completely selfless, has stable rank one, strict comparison, a unique quasitracial state, and real rank zero. In particular, these conclusions hold for every action in $\Astar(F_d,X)$.
\end{corx}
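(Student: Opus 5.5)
The plan is to reduce Corollary~\ref{T:free} to Corollary~\ref{T:amenable}, and hence to Theorem~\ref{T:PHP-main}. Three inputs are needed: that $F_d$ has property PHP, that the cyclic subgroup generated by the given element is an infinite amenable subgroup acting strictly ergodically, and that $\Astar(F_d,X)$ consists of actions of this type.

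\emph{Step 1: property PHP.} For $d\ge 2$ we write $F_d=\Zb*F_{d-1}$. This is a nonamenable free product of two nontrivial groups, so it has property PHP by the fact recalled after Theorem~\ref{T:PHP-main}. Alternatively, $F_d$ is acylindrically hyperbolic with trivial finite radical, so \cite[Proposition 15]{Oza25} applies.

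\emph{Step 2: the amenable subgroup.} Let $g\neq e$ act strictly ergodically, and set $H=\langle g\rangle$. Since $F_d$ is torsion-free, $H\cong\Zb$, so $H$ is infinite amenable. Strict ergodicity of the single homeomorphism $g$ means precisely that $H$ acts minimally and uniquely ergodically. The $F_d$-action preserves a probability measure by hypothesis. Corollary~\ref{T:amenable} therefore gives complete selflessness. For transparency, the underlying chain is as follows.
\begin{enumerate}
\item Any $F_d$-invariant measure is $H$-invariant, so it equals the unique $H$-invariant measure $\mu$. Hence the $F_d$-action is uniquely ergodic.
\item The action is minimal because $H$-orbits are already dense.
\item The $\Zb$-action $H\curvearrowright X$ has comparison by \cite{GlaLiu26}; classically, this also follows from Glasner--Weiss type arguments for minimal Cantor $\Zb$-systems.
\item By Proposition~\ref{P:comparison}, the $F_d$-action has dynamical comparison, with the subequivalences implemented by elements of $H$.
\end{enumerate}
Theorem~\ref{T:PHP-main} then yields complete selflessness, together with stable rank one, strict comparison, a unique quasitracial state and real rank zero.

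\emph{Step 3: the class $\Astar(F_d,X)$.} We must check that every action in $\Astar(F_d,X)$ satisfies the hypotheses, namely that it preserves a probability measure and that some nonidentity element (for instance, the first generator) acts strictly ergodically. This depends on the definition from \cite{BelGefKer25}. I expect this verification to be the main point of care, although it is not deep. The recollection I would rely on is that $\Astar(F_d,X)$ consists of actions in which one generator acts as a fixed strictly ergodic (e.g.\ odometer-type) homeomorphism, while the remaining generators are chosen as homeomorphisms preserving its unique invariant measure. In that case both hypotheses hold immediately: the common invariant measure is preserved by all generators, and the distinguished generator is strictly ergodic. If the definition instead uses a generic closure of such actions, we would need to check that both properties are preserved by the relevant construction. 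Strict ergodicity of that generator is built in and measure preservation is a closed condition, so I expect this check to go through.
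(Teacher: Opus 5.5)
Your proposal is correct and follows essentially the paper's argument. You pass to $H=\langle g\rangle\cong\Zb$, get comparison for $H\curvearrowright X$, transfer strict ergodicity and comparison to $F_d$, and apply Theorem~\ref{T:PHP-main} with PHP for $F_d$. The paper cites \cite[Theorem 6.33]{DowZha23} for comparison of $H\curvearrowright X$ directly rather than going through Corollary~\ref{T:amenable} and \cite{GlaLiu26}. Note also that the transfer step is Proposition~\ref{P:subgroup-comparison}, not Proposition~\ref{P:comparison}, which is the balanced-tower characterisation.

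Step~3 needs neither a recollection nor a closure argument. By the definition recalled in the introduction, every action in $\Astar(F_d,X)$ preserves a probability measure and all of its standard generators act strictly ergodically. Your description of the class via one fixed odometer-type generator is not the actual definition, though the properties you need hold all the same.
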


Here $\Astar(F_d,X)$ is the space of topologically free actions preserving a probability measure whose standard generators act as strictly ergodic and spectrally aperiodic homeomorphisms, introduced in \cite{BelGefKer25}. Corollary~\ref{T:free} greatly strengthens \cite[Theorem D]{BelGefKer25} via an alternative proof. 

The paper is structured as follows. In Section~\ref{S:prelim}, we collect the relevant preliminaries. Section~\ref{S:towers} characterises comparison by balanced towers, and Section~\ref{S:average} establishes norm averaging. Section~\ref{S:group-PHP} combines norm averaging with PHP and finally, in 
Section~\ref{S:applications}, we deduce our main results. 

\bigskip\noindent\textit{Acknowledgements}.
I thank the organisers and participants of the workshop ``Cuntz
Semigroups and Dynamics'' for fostering a stimulating environment in which many ideas related to this work were discussed. In particular, I thank Leonel Robert for his inspiring talk on selflessness. 

\bigskip\noindent\textit{AI declaration}.
ChatGPT-6.0 Astra Pro was used during the preparation of this work, including identifying an earlier version of the averaging argument, creating a draft of the manuscript, proofreading, and creating the figures. The author subsequently verified and simplified the mathematical arguments and takes full responsibility for the contents of the paper.

\section{Preliminaries}\label{S:prelim}

\begin{notation}
Throughout, $X$ denotes the Cantor set, $G$ a countable discrete
group with identity $e$, and $\Nb=\{1,2,\ldots\}$.
\end{notation}

\subsection{Cantor actions and comparison}
An action $G\curvearrowright X$ is \emph{strictly ergodic}
if it is minimal and uniquely ergodic. We write $M_G(X)$ for the
invariant Borel probability measures. An invariant measure of a
minimal Cantor action has full support and no atoms. An action of $\Zb$ will be identified with the homeomorphism corresponding to $1\in \Zb$. We also write $T\curvearrowright X$ for a homeomorphism $T : X \to X$. 

For clopen subsets $U,V\subseteq X$, write $U\prec_G V$ and say $U$ is \emph{(dynamically) subequivalent} to $V$, if there is a finite clopen partition $U=\bigsqcup_j U_j$ and elements
$g_j\in G$ such that the sets $g_jU_j$ are pairwise disjoint
subsets of $V$. 

\begin{df}
An action $G\curvearrowright X$ has \emph{dynamical
comparison} if $U\prec_G V$ whenever $U,V\subseteq X$ are clopen and
$\mu(U)<\mu(V)$ for every $\mu\in M_G(X)$.
\end{df}

For Cantor actions, this is equivalent to the usual formulation
with a closed source and an open target (see \cite[Proposition 3.6]{Ker20}). 

\begin{prop}\label{P:subgroup-comparison}
Suppose that $G\curvearrowright X$ preserves a probability measure
and that a subgroup $H\le G$ acts strictly ergodically with
dynamical comparison. Then the $G$-action is strictly ergodic and
has dynamical comparison.
\end{prop}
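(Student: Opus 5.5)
The plan is to verify minimality, unique ergodicity, and comparison separately; each reduces directly to the corresponding property of the subgroup $H$.

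\textbf{Minimality.} Every $G$-orbit contains an $H$-orbit, and $H$-orbits are dense by minimality of the $H$-action. Hence every $G$-orbit is dense, so the $G$-action is minimal.

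\textbf{Unique ergodicity.} By hypothesis $M_G(X)$ is nonempty. Any $G$-invariant Borel probability measure is in particular $H$-invariant, so $M_G(X)\subseteq M_H(X)=\{\nu\}$, where $\nu$ is the unique $H$-invariant measure. Since $M_G(X)$ is nonempty, $M_G(X)=\{\nu\}$. Thus the $G$-action is uniquely ergodic and, combined with minimality, strictly ergodic.

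\textbf{Dynamical comparison.} Let $U,V\subseteq X$ be clopen with $\mu(U)<\mu(V)$ for every $\mu\in M_G(X)$. Since $M_G(X)=M_H(X)=\{\nu\}$, this hypothesis says exactly that $\nu(U)<\nu(V)$, i.e.\ the strict inequality holds for every measure in $M_H(X)$. Comparison for the $H$-action then gives a finite clopen partition $U=\bigsqcup_j U_j$ and elements $h_j\in H$ such that the sets $h_jU_j$ are pairwise disjoint subsets of $V$. Because $H\le G$, the same data witness $U\prec_G V$.

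There is no genuine obstacle here. The only point needing care is the use of the hypothesis that the $G$-action preserves some probability measure: it is what makes $M_G(X)$ nonempty, so that $M_G(X)$ equals $M_H(X)$ rather than merely being contained in it. Without it, the comparison hypothesis for $G$ would be vacuous when $M_G(X)=\emptyset$, and one could not transfer it to the $H$-statement. This is also why the subequivalence is implemented by elements of $H$, as noted in the introduction.
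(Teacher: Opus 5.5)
Your proof is correct and follows the paper's argument step for step: minimality from $Hx\subseteq Gx$, unique ergodicity from $\emptyset\neq M_G(X)\subseteq M_H(X)=\{\nu\}$, and comparison by reading an $H$-subequivalence as a $G$-subequivalence. Your remark on why the $G$-invariant measure hypothesis is needed is accurate: it ensures $M_G(X)$ is nonempty, so $M_G(X)=M_H(X)$.
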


\begin{proof}
Since $H \le G$, we have $Hx \subseteq Gx$ for every $x\in X$ and so since $H\curvearrowright X$ is strictly ergodic (in particular minimal) $X = \overline{Hx}\subseteq \overline{Gx} \subseteq X$, thus $G\curvearrowright X$ is minimal. Since every $G$-invariant probability measure is $H$-invariant, the measure $\mu$ preserved by the $G$-action belongs to $M_H(X)$. By unique ergodicity of $H\curvearrowright X$, we conclude $\{\mu\} \subseteq M_G(X) \subseteq M_H(X) = \{\mu\}$, which shows that $G\curvearrowright X$ is uniquely ergodic and hence strictly ergodic. Finally, let $U,V\subseteq X$ be clopen and suppose that $\mu(U) < \mu(V)$. Then $U\prec_H V$ via comparison for $H\curvearrowright X$ and in particular $U\prec_G V$. 
\end{proof}

\subsection{Crossed products and their expectations}
Let a group $G$ act on a unital C$^*$-algebra $D$ by
automorphisms $\alpha_g$. We write $A=D\rtimes_\lambda G$ for
the reduced crossed product. This is a certain completion of the algebraic crossed product $D\rtimes_{\mathrm{alg}} G$, which is the $*$-algebra of Fourier polynomials $\sum_{g\in S} d_g u_g$, where $S\subseteq G$ is finite, $u_g$ are fixed unitaries indexed by $g\in G$ and $d_g \in D$, subject to the relations $u_gdu_g^*=\alpha_gd$. The reduced completion is distinguished by the existence of a faithful conditional expectation $E:A\to D$ is given by
\[
 E\Big(\sum_{g\in S}d_gu_g\Big)=d_e.
\]
An invariant faithful trace $\sigma$ on $D$ gives a faithful trace $\tau=\sigma\circ E$ on $A$. For a Cantor action, $\alpha_gf(x)=f(g^{-1}x)$ and we write $\mu(f)=\int f\,d\mu$. We refer the reader to \cite{BroOza08} for further background.

\begin{df}\label{D:NA}
Let $E:A\to D$ be a faithful conditional expectation and let
$\tau=\sigma\circ E$ be a faithful tracial state. We say that
$(E,\tau)$ has \emph{norm averaging} if, for every finite
$F\subset A$ and $\varepsilon>0$, there is a unitary $w\in A$
such that, for all $x\in F$, 
\begin{equation}\label{E:NA}
 \|E(w^*xw)-\tau(x)1\|<\varepsilon.
\end{equation}
\end{df}

\subsection{Selflessness and complete selflessness}
A C$^*$-probability space is a unital C$^*$-algebra with a state,
assumed to have faithful GNS representation unless stated otherwise.
All ultrapowers below are norm ultrapowers.

\begin{df}[\cite{Rob25}]
A unital C$^*$-algebra $A$ with faithful tracial state $\tau$ is
\emph{selfless} if there exist a nontrivial C$^*$-probability space
$(Q,\psi)$, a free ultrafilter $\cU$, and a state-preserving embedding
\[
 (A,\tau)*(Q,\psi)\longrightarrow (A^{\cU},\tau^{\cU})
\]
whose restriction to $A$ is the diagonal embedding.
\end{df}

Throughout, quasitraces mean $2$-quasitraces.
By \cite[Theorem 3.1]{Rob25}, a selfless tracial C$^*$-algebra
$(A,\tau)$ is simple, has stable rank one and strict comparison,
and has $\tau$ as its unique quasitracial state.

Let $(A,\tau)\subseteq(B,\rho)$ be an inclusion preserving the
states. It is \emph{completely existential} if there is a
state-preserving embedding $B\to A^{\cU}$ extending the diagonal
inclusion such that, for every C$^*$-algebra $C$, the induced map
on the algebraic tensor product extends continuously to
\[
 B\otimes_{\min}C\longrightarrow(A\otimes_{\min}C)^{\cU}.
\]
The pair $(A,\tau)$ is \emph{completely selfless} if its inclusion
in $(A,\tau)*(Q,\psi)$ is completely existential for some nontrivial
$(Q,\psi)$; see \cite[Section 6]{Oza25}.

The following is the criterion we will apply.

\begin{thm}[Ozawa, {\cite[Theorem 13]{Oza25}}]\label{T:Ozawa-isometry}
Suppose that $(A,\tau)\subseteq(B,\psi)$ is a unital
state-preserving inclusion and that there is an isometry
$T\in B^{\cU}$ satisfying, for all $x\in A$,
\[
 T^*xT=\tau(x)1,\quad
 \psi^{\cU}(xTT^*x^*)=0,\quad
 T+T^*\in A^{\cU}.
\]
Then $(A,\tau)$ is completely selfless.
\end{thm}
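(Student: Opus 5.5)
The plan is to use $T$ to build, inside $A^{\cU}$, a copy of the reduced free product $(A,\tau)*(C[-2,2],\sigma)$, where $\sigma$ is the semicircle law, with $A$ sitting diagonally; the copy of the second factor will be generated by $s=T+T^*$. Work in the GNS space $H=L^2(B^{\cU},\psi^{\cU})$ with cyclic vector $\xi$.

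\textbf{Step 1: turn the hypotheses into Fock-space orthogonality.} Let $\mathring A=\ker\tau$. The relation $T^*xT=\tau(x)1$ shows that $T^*\mathring A T=0$ and that $T^*T=1$. Next, $\psi^{\cU}(xTT^*x^*)=0$ says that $\|T^*x^*\xi\|=0$ for all $x\in A$, so $T^*$ kills $A\xi$. By Cauchy--Schwarz, every word beginning or ending in that pattern then has vanishing state. Consider the vectors
\[
 x_0Tx_1T\cdots Tx_n\xi,\qquad x_i\in A .
\]
Using these two relations repeatedly, one checks that $T$ and $T^*$ act on the closed span of such vectors exactly as the left creation and annihilation operators act on the full Fock space $\mathcal F=\bigoplus_n L^2(A,\tau)^{\otimes n}$. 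In that model, $A$ acts on the first tensor leg.

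\textbf{Step 2: identify the algebra generated by $A$ and $s$.} On the Fock model above, $s=\ell+\ell^*$ is semicircular, and $C^*(A,s)$ is a copy of $(A,\tau)*(C^*(s),\sigma)$ for the vacuum state. This is Voiculescu's standard realisation of freeness via creation operators, and it shows that $A$ and $s$ are $*$-free with respect to $\psi^{\cU}$. The free product state on $(A,\tau)*(Q,\psi)$ with $Q=C^*(s)$ is faithful. Hence the canonical map from the reduced free product onto $C^*(A,s)$ is injective and state-preserving, and it restricts to the diagonal embedding on $A$. Since $s\in A^{\cU}$ by hypothesis, this gives an embedding into $A^{\cU}$, and therefore selflessness.

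\textbf{Step 3: the complete version.} For a C$^*$-algebra $C$, the element $T\otimes1\in(B\otimes_{\min}C)^{\cU}$ satisfies the same relations, now relative to $A\otimes C$ and the slice maps $\tau\otimes\mathrm{id}$. Repeating Steps 1 and 2 at the level of Hilbert $C$-modules, with the Fock module built over $A\otimes_{\min}C$, gives a copy of $((A,\tau)*(Q,\psi))\otimes_{\min}C$ inside $(A\otimes_{\min}C)^{\cU}$. Faithfulness comes from the conditional expectation onto $C$, which is faithful on the reduced free product tensored with $C$. This yields the continuous extension required in the definition of a completely existential inclusion.

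\textbf{Main obstacle.} I expect the hardest point to be Step 1: showing that the relations force exactly the Fock-space inner products, with no extra relations, including for mixed words containing $T^*$. I would handle this by induction on word length. At each stage I would use $T^*xT=\tau(x)$ to contract an adjacent $T^*\cdots T$ pair, and use $T^*A\xi=0$ to kill terminal annihilations. I would then check that the resulting Gram matrix coincides with the free one.
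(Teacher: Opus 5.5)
The paper does not prove this statement; it quotes it from Ozawa's Theorem 13. Your proposal therefore has to stand on its own, and it has a genuine gap.

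Your Step 1 computation is fine, up to the model. The closed span $\mathcal K$ of the vectors $x_0Tx_1\cdots Tx_n\xi$ is $\bigoplus_{n\ge1}L^2(A,\tau)^{\otimes n}$, which is the free product space $L^2(A,\tau)*\ell^2(\Nb)$. In it, $T$ creates $\hat1$ and $T^*$ kills the whole first level; there is no separate vacuum.

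The gap is in Step 2, where you invoke ``the canonical map from the reduced free product onto $C^*(A,s)$''. Nothing you have proved produces such a map. What Steps 1--2 give is a $*$-homomorphism in the \emph{opposite} direction: restricting $C^*(A,s)\subseteq A^{\cU}$ to the invariant subspace $\mathcal K$ (the GNS representation of $\psi^{\cU}$) maps it \emph{onto} the reduced free product. Faithfulness of the free product state makes a state-preserving homomorphism \emph{out of} the reduced free product injective, but only once that homomorphism exists. Existence means knowing $\|p(A,s)\|_{A^{\cU}}\le\|p\|_{\mathrm{red}}$ for all noncommutative polynomials $p$, equivalently that restriction to $\mathcal K$ is injective on $C^*(A,s)$.

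This cannot come from moments. The state $\tau^{\cU}$ is in general not faithful on $A^{\cU}$, and freeness in distribution for a non-faithful state says nothing about norms. For example, the canonical generators of $C^*(F_2)$ are $*$-free Haar unitaries for the canonical trace. Yet there is no homomorphism $C^*_\lambda(F_2)\to C^*(F_2)$ matching generators, since $C^*(F_2)$ has a character.

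The missing idea is to use $T^*xT=\tau(x)1$ as a norm identity, not only to compute moments. It says exactly that the inclusion of $A$, together with $\hat x\otimes y\mapsto xTy$, is a Toeplitz representation of the C$^*$-correspondence $E=L^2(A,\tau)\otimes A$ over $A$. Your model on $\mathcal K$ is the Fock representation of its Toeplitz--Pimsner algebra $\mathcal T_E\cong(A,\tau)*(\mathcal T,\omega)$, where $\mathcal T$ is the Toeplitz algebra with vacuum state $\omega$.

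The argument then closes as follows.
\begin{itemize}
\item Pimsner's theorem says the Fock representation is universal for Toeplitz representations; this is a Coburn-type result, see also Fowler--Raeburn. It yields a homomorphism $\Theta:(A,\tau)*(\mathcal T,\omega)\to B^{\cU}$ with $\Theta(\ell)=T$, extending the diagonal embedding of $A$.
\item The hypothesis $\psi^{\cU}(xTT^*x^*)=0$, via your computation, makes $\psi^{\cU}\circ\Theta$ the free product state.
\item Faithfulness of that state on $C^*(A,\ell+\ell^*)\cong(A,\tau)*(C[-2,2],\sigma)$ now gives injectivity there, with image in $A^{\cU}$.
\end{itemize}

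Step 3 has the same gap in a sharper form. Continuity of $((A,\tau)*(Q,\psi))\otimes_{\min}C\to(A\otimes_{\min}C)^{\cU}$ is precisely the norm upper bound that complete existentiality demands. The faithful expectation onto $C$ only helps after that bound is known. The bound follows by applying the same universality to the correspondence $E\otimes C$ over $A\otimes_{\min}C$, with Toeplitz representation given by $T\otimes1$; its Toeplitz algebra is $\mathcal T_E\otimes_{\min}C$.

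So the obstacle you flagged, the Gram matrix, is routine. The real content is this norm comparison.
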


When referring to selflessness of a crossed product, we sometimes suppress the trace. It will be understood that we always mean the canonical trace induced by the invariant probability measure for the action.

\subsection{Ozawa's property PHP}
\begin{df}\label{D:group-PHP}
A group $G$ has \emph{property PHP} if, for every finite
$F\subseteq G$ and $\eta>0$, there is $N\in\Nb$ such that
for every $n\ge N$ there are $t_i\in G$ and subsets
$C_i\subseteq D_i\subseteq G$, $1\le i\le n$, for which
the indexed family
\[
 \{aC_i:a\in F,\ 1\le i\le n\}
 \ \cup\
 \{bt_i^{-1}(G\setminus D_i):b\in F,\ 1\le i\le n\}
\]
is pairwise disjoint, and
\begin{equation}\label{E:group-PHP-overlap}
 \Big\|\sum_{i=1}^n1_{D_i}
       +\sum_{i=1}^n1_{t_i^{-1}(G\setminus C_i)}
 \Big\|_\infty\le\eta\sqrt n.
\end{equation}
\end{df}

Counting the two indexed families separately in
\eqref{E:group-PHP-overlap} gives an equivalent formulation of
\cite[Section 8]{Oza25}; a change by a factor of two is immaterial
because $\eta$ is arbitrary.

\section{Comparison and balanced clopen towers}\label{S:towers}

A \emph{clopen tower} is a pair $(B,S)$, where $B\subseteq X$ is
nonempty and clopen, $S\subseteq G$ is finite and nonempty, and
the levels $sB$, $s\in S$, are pairwise disjoint. A \emph{tower
partition} is a finite family of such towers whose levels partition
$X$. We do not require $e\in S$.

\begin{df}\label{D:balanced}
Let $\mu$ be an invariant probability measure. The action has
\emph{balanced towers with respect to $\mu$} if, for every finite
$\mathcal F\subset C(X)$, $\delta>0$, and $N\in\Nb$, there is a
clopen tower partition $\{(B_i,S_i)\}_i$ with $|S_i|\ge N$ and for all $f\in \mathcal{F}$ and $y\in B_i$,
\begin{equation}\label{E:balanced}
 \Big|\frac1{|S_i|}\sum_{s\in S_i}f(sy)-\mu(f)\Big|<\delta. 
\end{equation}
\end{df}

\begin{prop}\label{P:comparison}
A strictly ergodic action $G\curvearrowright X$ of a countable discrete group on the Cantor set has dynamical comparison if and only if it has balanced towers with respect to its unique invariant measure.
\end{prop}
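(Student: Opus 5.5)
The plan is to prove the two implications separately. Sufficiency of balanced towers is a direct combinatorial matching inside each tower. Necessity requires building the towers, and comparison will be used repeatedly to place disjoint copies of a small base set.

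\emph{Balanced towers imply comparison.} Let $U,V$ be clopen with $\mu(U)<\mu(V)$, and fix $\delta>0$ with $\mu(U)+\delta<\mu(V)-\delta$. Apply Definition~\ref{D:balanced} to $\mathcal F=\{1_U,1_V\}$ and this $\delta$, with $N=1$. For each tower $(B_i,S_i)$, partition $B_i$ into finitely many clopen pieces $B'$ on which the pattern
\[
S_U(B')=\{s\in S_i:sB'\subseteq U\},\qquad S_V(B')=\{s\in S_i:sB'\subseteq V\}
\]
is constant. This is possible because the maps $y\mapsto 1_U(sy)$ and $y\mapsto 1_V(sy)$ are locally constant and there are only finitely many patterns. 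By \eqref{E:balanced} applied at any $y\in B'$, we get $|S_U(B')|<|S_i|(\mu(U)+\delta)<|S_i|(\mu(V)-\delta)<|S_V(B')|$. Choose an injection $\sigma\colon S_U(B')\to S_V(B')$ and send the piece $sB'$ to $\sigma(s)B'$ using the group element $\sigma(s)s^{-1}$. The sets $sB'$ (over all towers, pieces and $s\in S_U(B')$) form a clopen partition of $U$, since the levels partition $X$. Their images are distinct levels $\sigma(s)B'$, which are pairwise disjoint and contained in $V$. Hence $U\prec_G V$.

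\emph{Comparison implies balanced towers.} Fix $\mathcal F$, $\delta$ and $N$. First choose a clopen partition $P_1,\dots,P_m$ of $X$ such that each $f\in\mathcal F$ is within $\delta/3$ of a function constant on each $P_j$. Next choose an integer $K\ge N$ large and nonnegative integers $k_j$ with $\sum_j k_j=K$, $k_j/K\approx\mu(P_j)$, and $k_j<K\mu(P_j)$ whenever $\mu(P_j)>0$. Every $\mu(P_j)>0$ because $\mu$ has full support, so $k_j=\lfloor K\mu(P_j)\rfloor$ works up to adjusting the total. Then pick a nonempty clopen set $B$ with $\mu(B)$ slightly less than $1/K$, chosen so that $k_j\mu(B)<\mu(P_j)$ for all $j$ while $1-K\mu(B)$ stays small. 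Such $B$ exists because $\mu$ is nonatomic with full support, so clopen measures are dense in $[0,1]$.

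Now place $K$ disjoint copies of $B$ one at a time. At each step the target is $P_j$ minus the images already placed there, which still has measure greater than $\mu(B)$, so comparison gives $B\prec_G$ (remaining part of $P_j$). Passing to the common refinement of all these finitely many partitions of $B$ yields clopen pieces $B'$ of $B$ on which every copy is a single translate. Each piece therefore gives a clopen tower $(B',S')$ with $|S'|=K$ and exactly $k_j$ levels inside $P_j$.

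Let $R$ be the complement of the union of all levels. It is clopen with $\mu(R)\approx 1-K\mu(B)$, which is less than $\mu(B)$ if the parameters are chosen well, so $R\prec_G B$ via pieces $R_l$ and group elements $g_l$. Refine the bases once more so that each base piece $B''$ either lies in a single $g_lR_l$ or misses all of them. For each $B''\subseteq g_lR_l$, adjoin the extra level $g_l^{-1}B''$ to that tower. These extra levels are disjoint because the sets $g_lR_l$ are disjoint, and together they cover $R$. Every tower then has height $K$ or $K+1$, with level counts in each $P_j$ equal to $k_j$ or $k_j+1$. For $y\in B''$, the average of $f$ over the tower is therefore within $\delta/3+O(m/K)+O(\|f\|/K)$ of $\mu(f)$, which is less than $\delta$ once $K$ is large.

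\emph{Main obstacle.} I expect the hardest step to be the bookkeeping in the necessity direction. The parameters $K$, $k_j$ and $\mu(B)$ must be chosen so that three things hold simultaneously: every successive comparison has a strict measure inequality, the remainder $R$ has measure less than $\mu(B)$, and the proportions $k_j/K$ approximate $\mu(P_j)$ well. A further point is to check that the iterated refinements of the base keep all towers clopen and all levels disjoint. If the strict inequalities prove awkward, I would fall back on choosing $\mu(B)\in\bigl(1/(K+1),1/K\bigr)$ and letting each $k_j$ absorb the rounding.
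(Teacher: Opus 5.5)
Your converse direction (balanced towers $\Rightarrow$ comparison) is correct and is the paper's argument. The outline of your forward direction also matches the paper: approximate by step functions, pack copies of a small base using comparison, refine the base, and attach the remainder. The gap is in the parameter bookkeeping you flagged, and it is a real one.

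As written, $\sum_j k_j=K$ together with $k_j<K\mu(P_j)$ for all $j$ is contradictory, since summing over $j$ gives $K<K$. What your construction actually needs is two things. First, $k_j\mu(B)<\mu(P_j)$ for every $j$, so that each successive comparison has a strict inequality. Second, $1-K\mu(B)<\mu(B)$, so that $R\prec_G B$; note $\mu(R)=1-K\mu(B)$ exactly. Together these force $1/(K+1)<\mu(B)<\mu(P_j)/k_j$, i.e.\ integers $0\le k_j<(K+1)\mu(P_j)$ with $\sum_j k_j=K$. For $m\ge 2$, such integers exist only if no $(K+1)\mu(P_j)$ is an integer and the fractional parts $\{(K+1)\mu(P_j)\}$ sum to exactly $1$. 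Typically that sum is an integer around $m/2$. For instance, with $m=3$ and $\mu(P_j)=1/3$ the condition holds only for $K\equiv 0 \pmod 3$; for $K=4$ or $K=5$ one is forced to take every $k_j\le 1$. So neither ``$k_j=\lfloor K\mu(P_j)\rfloor$ up to adjusting the total'' nor your fallback $\mu(B)\in(1/(K+1),1/K)$ works for an arbitrary large $K$. In general the remainder cannot be fitted into a single copy of $B$.

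There are two ways out. You can keep the one-extra-level scheme by choosing $K$ arithmetically. Dirichlet's simultaneous approximation gives arbitrarily large $K$ with every $K\mu(P_j)$ within $\epsilon$ of an integer. For $\epsilon<\min_j\min(\mu(P_j),1-\mu(P_j))$ and $\epsilon<1/m$, each $\{(K+1)\mu(P_j)\}$ is then within $\epsilon$ of $\mu(P_j)$, hence nonzero, and their sum is an integer within $m\epsilon$ of $1$, hence exactly $1$. So $k_j=\lfloor (K+1)\mu(P_j)\rfloor$ works. This is valid, but it is an input your proposal does not supply.

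The paper's route is simpler and avoids arithmetic by giving up on a single extra level. Fix $B$ of small measure $b$ and set $n_j=\lfloor \mu(P_j)/b\rfloor-1$. Then each $P_j$ retains between $b$ and $2b$ of unused measure. This makes every packing step strict automatically and gives $\mu(R)<2mb$. Comparison then embeds $R$ into the union of $2m$ of the placed copies. Pulling this back through the copy maps and refining the base adds at most $2m$ levels to each tower. That perturbs the proportions by $O(m/n)$, where the height $n\approx 1/b$ can be made arbitrarily large.
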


The construction can be viewed as packing and stacking; see
Figure~\ref{fig:balanced-towers}. Colour $X$ by a finite clopen
partition $P_1,\ldots,P_k$. Comparison allows us to fill each
$P_i$ with disjoint piecewise-translation copies of a small clopen
set $B$, leaving between one and two copies' worth of measure
unfilled. The numbers of copies are chosen in approximately the
proportions $\mu(P_i)$. Refining their common source $B$ turns
them into genuine tower levels.

To obtain a partition of all of $X$, we must also include the
remainder $R$. Its measure is less than that of $2k$ copies of
$B$, so comparison embeds $R$ into $2k$ of the copies already
placed. Pulling back through this embedding gives $2k$ partial
piecewise translations from $B$ to $R$. A further refinement makes
each partial map either undefined or a single translation on every
new base. Thus each tower acquires at most $2k$ additional levels,
while its original height can be arbitrarily large.

\begin{figure}[htbp]
\centering
\begin{tikzpicture}[x=0.92cm,y=1cm,font=\footnotesize,>=Stealth]
 \foreach \x/\shade/\lab in {0/12/1,1.55/28/2,3.1/45/k} {
   \filldraw[fill=black!\shade,line width=0.35pt]
     (\x,0) rectangle +(1.15,1.8);
   \draw[line width=0.3pt] (\x,0.6) -- +(1.15,0);
   \draw[line width=0.3pt] (\x,1.2) -- +(1.15,0);
   \foreach \y in {0.3,0.9,1.5}
     \node at (\x+0.575,\y) {$B$};
   \draw[dashed,line width=0.4pt] (\x,1.8) rectangle +(1.15,0.7);
   \node at (\x+0.575,2.15) {$R\cap P_{\lab}$};
   \node at (\x+0.575,-0.28) {$P_{\lab}$};
 }
 \node at (2.1,2.9) {pack the partition sets};
 \draw[->] (4.55,1.25) -- (7.1,1.25);
 \node[align=center] at (5.8,1.85) {refine $B$;\\attach $R$};
 \foreach \j/\shade in {0/12,1/28,2/45} {
   \filldraw[fill=black!\shade,line width=0.35pt]
     (7.4,0.6*\j) rectangle +(1.5,0.6);
 }
 \draw[dashed,line width=0.4pt] (7.4,1.8) rectangle +(1.5,0.7);
 \node at (8.15,2.15) {from $R$};
 \node at (8.15,-0.28) {base $B'$};
 \node at (8.6,2.9) {one refined tower};
 \node[anchor=west,align=left] at (9.1,0.9)
   {$n$ original levels,\\in the desired proportions};
 \node[anchor=west,align=left] at (9.1,2.15)
   {at most $2k$\\additional levels};
\end{tikzpicture}
\caption{Schematic construction of balanced towers.}
\label{fig:balanced-towers}
\end{figure}

\begin{proof}
$[\Rightarrow]$. We first prove the following. 

\textbf{Claim.} Given a partition $X=P_1\sqcup\cdots\sqcup P_k$ into nonempty clopen sets,
$\eta>0$, and $N\in\Nb$, there is a tower partition with heights
at least $N$ such that every level is contained in one of the
$P_i$ and, in every tower, the proportion of levels in $P_i$
differs from $p_i:=\mu(P_i)$ by less than $\eta$.

\smallskip\noindent\textit{Step 1.}
Choose a nonempty clopen $B$ of measure $b=\mu(B)>0$, and set
\[
 n_i=\left\lfloor\frac{p_i}{b}\right\rfloor-1,
 \qquad n=\sum_{i=1}^k n_i.
\]
The measure $b$ can be arbitrarily small since $\mu$ is regular and nonatomic. Since all $p_i$ are positive and $n>1/b-2k$, we can choose $B$ so that all $n_i$
are positive and
\begin{equation}\label{E:height-choice}
 n\ge\max\{N,2k\},\qquad \frac{4k+2}{n}<\eta.
\end{equation}
We also have, for all $1\le i\le k$,
\begin{equation}\label{E:remainder-measure}
 b\le p_i-n_i b<2b.
\end{equation}

For each $i$, we successively place $n_i$ disjoint copies of
$B$ in $P_i$ using clopen piecewise translations. Indeed, after
$\ell<n_i$ copies have been placed, the unused part of $P_i$
is clopen and has measure
\[
 p_i-\ell b=(p_i-n_i b)+(n_i-\ell)b\ge2b>b.
\]
Comparison therefore supplies the next copy. Each image has
measure $b$, since piecewise translations preserve $\mu$.
Enumerate the resulting maps by
$\varphi_1,\ldots,\varphi_n:B\to X$, and put
$Y_j=\varphi_j(B)$. The $Y_j$ are pairwise disjoint clopen
sets, exactly $n_i$ of which lie in $P_i$.

\smallskip\noindent\textit{Step 2.}
Let $R=X\setminus\bigsqcup_{j=1}^nY_j$. Summing
\eqref{E:remainder-measure} gives
\[
 kb\le\mu(R)<2kb.
\]
Since $n\ge2k$, choose $2k$ of the ranges and write
$V=\bigsqcup_{j=1}^{2k}Y_j$. Then $\mu(R)<\mu(V)$, so
comparison gives a subequivalence $\theta:R\to V$. For $1\le j\le2k$, define the clopen subset
\[
 D_j=\varphi_j^{-1}(Y_j\cap\theta(R))\subseteq B
\]
and maps 
\[
 \psi_j=\theta^{-1}\circ\varphi_j|_{D_j}:D_j\rightarrow R.
\]
These maps have pairwise disjoint images, and
\begin{equation}\label{E:remainder-partition}
 R=\bigsqcup_{j=1}^{2k}\psi_j(D_j).
\end{equation}
Indeed, the sets $Y_j\cap\theta(R)$ partition $\theta(R)$,
and applying $\theta^{-1}$ gives exactly this identity.
Some $D_j$ may be empty.

\smallskip\noindent\textit{Step 3.}
Take a finite partition $\mathcal B$ of $B$ into nonempty clopen
sets such that, for each $B'\in\mathcal B$:
\begin{enumerate}
\item every $\varphi_j|_{B'}$ is a single group translation;
\item for each $j\le2k$, either $B'\subseteq D_j$ or
      $B'\cap D_j=\emptyset$;
\item when $B'\subseteq D_j$, the map $\psi_j|_{B'}$ is a
      single group translation and $\psi_j(B')$ lies in one
      member of $\{P_1,\ldots,P_k\}$.
\end{enumerate}
Such a partition exists by taking a common refinement of the
finitely many clopen pieces defining the $\varphi_j$ and
$\psi_j$, the sets $D_j$, and the clopen sets
$\psi_j^{-1}(P_i)$.

For $B'\in\mathcal B$, write
$\varphi_j|_{B'}=g_{j,B'}|_{B'}$, and, whenever
$B'\subseteq D_j$, write $\psi_j|_{B'}=h_{j,B'}|_{B'}$.
Define
\[
 S_{B'}=\{g_{j,B'}:1\le j\le n\}
       \cup\{h_{j,B'}:1\le j\le2k,\ B'\subseteq D_j\}.
\]
The corresponding images of $B'$ are disjoint. The original
images lie in distinct $Y_j$, the additional images lie in the
disjoint sets in \eqref{E:remainder-partition}, and no original
image meets $R$. In particular, all the displayed group elements
are distinct since equal elements would give equal nonempty images of
$B'$. Thus $(B',S_{B'})$ is a clopen tower of height
\[
 |S_{B'}|=n+t_{B'},\qquad
 t_{B'}=|\{j\le2k:B'\subseteq D_j\}|\le2k.
\]

These towers form a partition of $X$. For fixed $j$, the sets
$\varphi_j(B')$, as $B'$ runs through $\mathcal B$, partition
$Y_j$. Likewise, the sets $\psi_j(B')$ with $B'\subseteq D_j$
partition $\psi_j(D_j)$. Together with
\eqref{E:remainder-partition}, this proves both coverage and
disjointness of all levels, including levels from different towers.

\smallskip\noindent\textit{Step 4.}
Fix a resulting tower and abbreviate $t=t_{B'}$. Let $r_i$ be
the number of its additional levels lying in $P_i$. Its total
number of levels in $P_i$ is $n_i+r_i$, where
$0\le r_i\le t$ and $\sum_i r_i=t$. Put
\[
 d_i= \frac{p_i}{b}-n_i\in[1,2),\qquad
 d=\sum_{i=1}^k d_i=\frac1b - n \in[k,2k).
\]
Since $n_i-p_i n=p_i d-d_i$, we have
\[
 \left|\frac{n_i}{n}-p_i\right|<\frac{2k+2}{n}.
\]
Adding the extra levels changes this proportion by at most
\[
 \left|\frac{n_i+r_i}{n+t}-\frac{n_i}{n}\right|
 =\frac{|nr_i-n_i t|}{n(n+t)}
 \le\frac{t}{n+t}\le\frac{2k}{n}.
\]
Consequently,
\begin{equation}\label{E:balance-bound}
 \left|\frac{n_i+r_i}{n+t}-p_i\right|
 <\frac{4k+2}{n}<\eta.
\end{equation}
Every height is at least $n\ge N$, proving the claim.

We now deduce balanced towers for continuous functions. Given
finite $\mathcal F\subset C(X)$ and $\delta>0$, choose a common
clopen partition $P_1,\ldots,P_k$ and functions
$f_0=\sum_i c_{f,i}1_{P_i}$ with
$\|f-f_0\|<\delta/3$ for $f\in\mathcal F$.
This is possible because locally constant functions are uniformly
dense in $C(X)$. Choose $\eta>0$ so that
$\eta\sum_i|c_{f,i}|<\delta/3$ for every $f\in\mathcal F$,
and apply the assertion with the prescribed minimum height $N$.
If $q_i$ denotes the proportion of levels in $P_i$ in one of
these towers $(B',S)$, then, for every $y\in B'$,
\begin{align*}
 \Big|\frac1{|S|}\sum_{s\in S}f(sy)-\mu(f)\Big|
 &\le2\|f-f_0\|+\Big|\sum_i c_{f,i}(q_i-p_i)\Big|\\*
 &\le2\|f-f_0\|+\sum_i|c_{f,i}|\,|q_i-p_i|<\delta.
\end{align*}
This proves balanced towers.

\noindent $[\Leftarrow]$. Let $U,V\subseteq $ be clopen with $\mu(U)<\mu(V)$, and choose
$\delta>0$ with $2\delta<\mu(V)-\mu(U)$. Apply balanced
towers to $\{1_U,1_V\}$, with minimum height $1$.
Refine each base $B$ by the clopen sets $B\cap s^{-1}U$
and $B\cap s^{-1}V$, for all $s$ in its shape. Keep the
same shape above each refined base. This remains a tower
partition and retains the balancing inequalities, while every
level now lies either wholly in or wholly outside each of $U,V$.

For a resulting tower $(B',S)$, let
\[
 S_U=\{s\in S:sB'\subseteq U\},\qquad
 S_V=\{s\in S:sB'\subseteq V\}.
\]
The balancing inequalities give
\[
 \frac{|S_U|}{|S|}<\mu(U)+\delta
 <\mu(V)-\delta<\frac{|S_V|}{|S|}.
\]
Choose an injection $\iota:S_U\to S_V$, and on each level
$sB'\subseteq U$ use the group element $\iota(s)s^{-1}$ to
map it onto $\iota(s)B'\subseteq V$.
The source levels, over all towers, form a finite clopen partition
of $U$. Their images are disjoint within each tower because
$\iota$ is injective, and are disjoint between towers because
the tower levels partition $X$. Thus these translations define
a subequivalence $U\prec_G V$, which proves dynamical comparison.
\end{proof}

\section{Norm averaging in reduced crossed products}\label{S:average}

\subsection{Averaging in matrices}
The basic operation is a change of basis that replaces diagonal
entries by their average. For example, with
\[
 H=\frac1{\sqrt2}\begin{pmatrix}1&1\\1&-1\end{pmatrix},
 \qquad
 H^*\begin{pmatrix}a&0\\0&b\end{pmatrix}H
 =\frac12\begin{pmatrix}a+b&a-b\\a-b&a+b\end{pmatrix},
\]
both new diagonal entries equal $(a+b)/2$.
More generally, put $\omega_n=e^{2\pi i/n}$. The \emph{Fourier
matrix} is the unitary
\begin{equation}\label{E:Fourier-matrix}
 F_n=\bigl[n^{-1/2}\omega_n^{pq}\bigr]_{p,q=0}^{n-1}.
\end{equation}
Orthogonality of the powers of $\omega_n$ gives $F_n^*F_n=1$ and
\[
 (F_n^*\diag(a_0,\ldots,a_{n-1})F_n)_{jj}
 =\frac1n\sum_{p=0}^{n-1}a_p.
\]
Thus Fourier conjugation averages every diagonal matrix exactly.

For a general matrix $X$, its off-diagonal entries also contribute
to the new diagonal. We first insert phases which temper these contributions. Indeed, if
$U_\zeta=\diag(\zeta_0,\ldots,\zeta_{n-1})$ with $|\zeta_p|=1$,
then
\begin{equation}\label{E:phase-averaging}
 \bigl(F_n^*U_\zeta^*XU_\zeta F_n\bigr)_{jj}
 -\tr_n(X)
 =\frac1n\sum_{p\ne q}\overline\zeta_p\zeta_q
                 \omega_n^{(q-p)j}X_{pq},
\end{equation}
where $\tr_n=n^{-1}\operatorname{Tr}$.
The phases leave the original diagonal unchanged and allow the
off-diagonal contributions to cancel.

The next lemma gives one unitary that works simultaneously for
finitely many matrices. Its proof has two steps. Averaging over
finitely many choices of phases makes all but a bounded number of
diagonal positions good. Each exceptional position is then mixed,
using another Fourier matrix, with many good positions that have
small matrix entries between them. This dilutes the exceptional
value while keeping the off-diagonal error small.

\begin{lemma}\label{L:matrix}
For every $k\in\Nb$ and $M,\eta>0$, there exists $N\in\Nb$ such
that, whenever $n\ge N$ and $X_1,\ldots,X_k\in M_n(\Cb)$ have
norm at most $M$, there is $W\in\cU(M_n(\Cb))$ with
\[
 \max_{1\le r\le k,\,0\le j<n}
 \left|(W^*X_rW)_{jj}-\tr_n(X_r)\right|<\eta.
\]
\end{lemma}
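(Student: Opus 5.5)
The proof has two stages, as announced before the statement: a random choice of phases in \eqref{E:phase-averaging}, then a second Fourier mixing that dilutes the few remaining exceptional positions. Fix $k$, $M$, $\eta$. In the first stage, let $\zeta_0,\ldots,\zeta_{n-1}$ be independent and uniformly distributed on the unit circle. For each $r$ and $j$, write $e_{r,j}$ for the right-hand side of \eqref{E:phase-averaging} with $X=X_r$. Since $\mathbb E[\overline\zeta_p\zeta_q\zeta_{p'}\overline\zeta_{q'}]$ vanishes for $p\neq q$ unless $(p',q')=(p,q)$, we get
\[
 \mathbb E|e_{r,j}|^2=\frac1{n^2}\sum_{p\ne q}|(X_r)_{pq}|^2\le\frac{\|X_r\|_{2}^2}{n^2}\le\frac{M^2}{n},
\]
where $\|\cdot\|_2$ is the Hilbert--Schmidt norm, using $\|X_r\|_2^2\le nM^2$. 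Summing over $j$ and $r$ gives $\mathbb E\sum_{r,j}|e_{r,j}|^2\le kM^2$, so some deterministic choice of phases achieves this bound. For that choice, put $Y_r=F_n^*U_\zeta^*X_rU_\zeta F_n$. Call a position $j$ \emph{bad} if $|(Y_r)_{jj}-\tr_n(X_r)|\ge\eta/2$ for some $r$. The number of bad positions is at most $L:=4kM^2/\eta^2$, a bound independent of $n$.

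In the second stage, fix $m\in\Nb$ with $2M/(m+1)<\eta/4$, and set $\theta=\eta/(4(m+1))$. For each bad $j$ I will choose a block $\{j\}\cup G_j$ consisting of $j$ together with $m$ good positions. The blocks for different bad $j$ are pairwise disjoint, and within each block all entries $|(Y_r)_{pq}|$ with $p\ne q$ are less than $\theta$ for every $r$. The choice is greedy and rests on a Hilbert--Schmidt count: $\sum_q|(Y_r)_{pq}|^2\le\|Y_r\|^2\le M^2$, and likewise for columns. Hence each index $p$ has at most $2kM^2/\theta^2$ partners $q$ with $|(Y_r)_{pq}|\ge\theta$ or $|(Y_r)_{qp}|\ge\theta$ for some $r$. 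Each chosen index therefore excludes only boundedly many others. If
\[
 n\ge N:=L(m+1)\bigl(1+2kM^2/\theta^2\bigr)+2L,
\]
the greedy selection never runs out of admissible good positions.

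Now let $V$ act as $F_{m+1}$ on the coordinates of each block and as the identity elsewhere, and take $W=U_\zeta F_nV$. Positions outside the blocks are untouched, so there the error is below $\eta/2$. At a position inside a block $\Lambda$, the new diagonal entry is $\frac1{m+1}\sum_{p\in\Lambda}(Y_r)_{pp}$ plus an off-diagonal term. This term is $\frac1{m+1}$ times a sum over the $(m+1)m$ pairs $p\neq q$ in $\Lambda$, each of modulus less than $\theta$, so it is at most $m\theta<\eta/4$. The averaged diagonal differs from $\tr_n(X_r)$ by less than
\[
 \frac{m}{m+1}\cdot\frac\eta2+\frac{2M}{m+1}<\frac\eta2+\frac\eta4,
\]
since the $m$ good terms are within $\eta/2$ and the single bad term is within $2M$. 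Altogether every diagonal error is less than $\eta$.

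I expect the main obstacle to be this second-stage bookkeeping. One has to check that the phase-free Fourier mixing of a block really produces ``block average plus off-diagonal error'', exactly as in the diagonal formula preceding \eqref{E:phase-averaging} but with the $(m+1)m$ off-diagonal block entries. One also has to verify that the greedy selection can simultaneously keep the blocks disjoint, avoid other bad positions, and control the small entries for all $k$ matrices, with the constants chosen in the order $m$, then $\theta$, then $N$.
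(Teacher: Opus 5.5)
Your proposal is correct and follows essentially the paper's own argument. Phase averaging bounds the sum of squared diagonal errors by $kM^2$, giving boundedly many bad positions independent of $n$; each bad position is then Fourier-mixed with good positions chosen greedily via row/column $\ell^2$ bounds so that off-diagonal entries within each block are small. The differences from the paper are cosmetic: you use Haar-random phases on the circle instead of averaging over $\{\pm1,\pm i\}^n$, blocks of size $m+1$ instead of $m$, and slightly different thresholds and a more generous choice of $N$.
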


\begin{proof}
Put $\tau_r=\tr_n(X_r)$ and $\delta=\eta/3$. Choose an integer
$m\ge2$ with $2M/m<\delta$, and put $\theta=\delta/m$.
These choices depend only on $M$ and $\eta$.

\smallskip\noindent\textit{Step 1: make most diagonal positions good.}
Let $\Xi=\{1,-1,i,-i\}$, and average over the finite set
$\Xi^n$, writing $\operatorname{Av}_\zeta$ for its normalised
average. For $p\ne q$, the functions
$\zeta\mapsto\overline\zeta_p\zeta_q$ are orthonormal:
\[
 \operatorname{Av}_\zeta
 (\overline\zeta_p\zeta_q)
 \overline{(\overline\zeta_a\zeta_b)}
 =\begin{cases}1,&(p,q)=(a,b),\\0,&(p,q)\ne(a,b),\end{cases}
 \quad p\ne q,\ a\ne b.
\]
Indeed, in the second case some coordinate has a nonzero exponent
in $\{-2,-1,1,2\}$, whose average over $\Xi$ is zero.

For $Y_r(\zeta)=F_n^*U_\zeta^*X_rU_\zeta F_n$,
\eqref{E:phase-averaging} therefore gives
\begin{align*}
 \operatorname{Av}_\zeta\sum_{r=1}^k\sum_{j=0}^{n-1}
     |Y_r(\zeta)_{jj}-\tau_r|^2
 &=\frac1n\sum_{r=1}^k\sum_{p\ne q}|(X_r)_{pq}|^2\\*
 &\le kM^2.
\end{align*}
The last inequality uses
$\sum_{p,q}|(X_r)_{pq}|^2=\operatorname{Tr}(X_r^*X_r)\le nM^2$.
Fix phases for which the sum is at most $kM^2$, and write
$W_0=U_\zeta F_n$ and $Y_r=W_0^*X_rW_0$.
Call a position $j$ \emph{bad} if
$|Y_{r,jj}-\tau_r|>\delta$ for some $r$, and good otherwise.
There are at most $B=\lceil kM^2/\delta^2\rceil$ bad positions.

\smallskip\noindent\textit{Step 2: spread each bad position among good ones.}
For any fixed index $p$, the row and column norm estimates give
\[
 \sum_q\sum_{r=1}^k
       \bigl(|Y_{r,pq}|^2+|Y_{r,qp}|^2\bigr)\le2kM^2.
\]
Consequently, at most $D=\lceil2kM^2/\theta^2\rceil$ indices
$q\ne p$ have $|Y_{r,pq}|>\theta$ or $|Y_{r,qp}|>\theta$
for some $r$.

If $n>m(B+D)$, we can therefore choose disjoint blocks of
$m$ indices, one for each bad position, such that each block
contains that bad position and $m-1$ good positions, and
\begin{equation}\label{E:weak-blocks}
 |Y_{r,pq}|\le\theta
 \quad\text{for distinct }p,q\text{ in the same block and all }r.
\end{equation}
To do this, add good indices one at a time. All bad or already used
indices exclude at most $mB$ choices. The fewer than $m$ indices
in the current block exclude at most $mD$ further choices by the
preceding estimate. This proves that the greedy construction can
always continue. Taking $N=m(B+D)+1$ makes this valid for every
$n\ge N$.

Let $V$ act as $F_m$ on each selected block and as the identity on
the remaining coordinates. On a block $I$, the diagonal part of
$Y_r$ contributes its average. Since only one position is bad,
\[
 \left|\frac1m\sum_{p\in I}Y_{r,pp}-\tau_r\right|
 \le\frac{2M+(m-1)\delta}{m}<2\delta.
\]
Here $|Y_{r,pp}-\tau_r|\le2M$ at every position. By
\eqref{E:weak-blocks}, the off-diagonal contribution to any new
diagonal entry has absolute value at most
\[
 \frac1m\sum_{\substack{p,q\in I\\p\ne q}}|Y_{r,pq}|
 \le(m-1)\theta<\delta.
\]
Thus every new diagonal entry is within $3\delta=\eta$ of
$\tau_r$. Positions outside these blocks were already good and
are unchanged. The unitary $W=W_0V$ proves the lemma.
\end{proof}

\subsection{Tower matrix algebras}
Fix a clopen tower $(B,S)$, and enumerate $S=\{s_1,\ldots,s_n\}$.
Functions on $B$ are extended by zero to $X$. The map
\begin{equation}\label{E:tower-algebra}
 \Phi_B:M_n(C(B))\rightarrow C(X)\rtimes_\lambda G,
 \qquad
 [g_{ij}]\mapsto\sum_{i,j=1}^n u_{s_i}g_{ij}u_{s_j}^*
\end{equation}
is an injective $*$-homomorphism with unit
$1_{\bigsqcup_{s\in S}sB}$. The expectation reads the diagonal of a tower matrix. For
$[g_{ij}]\in M_n(C(B))$ and $y\in B$,
\[
 E(\Phi_B([g_{ij}]))(s_jy)=g_{jj}(y).
\]
On the tower, $f\in C(X)$ corresponds to the diagonal matrix
\[
 \diag(f(s_1y),\ldots,f(s_ny)).
\]
Conjugating this matrix by the
Fourier matrix \eqref{E:Fourier-matrix} and then applying $E$
therefore replaces every value on the tower by
\[
 \frac1n\sum_{s\in S}f(sy).
\]
For balanced towers this average is uniformly close to $\mu(f)$.
Figure~\ref{fig:tower-averaging} shows the operation on one tower
of height four. General Fourier polynomials give matrices with
off-diagonal entries; Lemma~\ref{L:matrix} handles these entries
while retaining the same diagonal average.

\begin{figure}[htbp]
\centering
\begin{tikzpicture}[x=1cm,y=1cm,font=\footnotesize,>=Stealth]
 \node at (0.8,2.35) {tower over $y$};
 \node at (4.35,2.35) {matrix at $y$};
 \node at (9.45,2.35) {new diagonal};
 \foreach \j/\shade in {0/6,1/13,2/20,3/27} {
   \pgfmathsetmacro{\bottom}{0.48*\j}
   \pgfmathsetmacro{\top}{0.48*(\j+1)}
   \pgfmathsetmacro{\mid}{0.48*(\j+0.5)}
   \filldraw[fill=black!\shade,draw=black,line width=0.35pt]
     (0,\bottom) rectangle (1.6,\top);
   \node at (0.8,\mid) {$a_{\j}$};
 }
 \node at (4.35,0.96) {$\begin{pmatrix}
 a_0&0&0&0\\0&a_1&0&0\\0&0&a_2&0\\0&0&0&a_3
 \end{pmatrix}$};
 \node at (9.45,0.96) {$\begin{pmatrix}
 \overline a\\ \overline a\\ \overline a\\ \overline a
 \end{pmatrix}$};
 \draw[->,line width=0.45pt] (1.85,0.96) -- (2.8,0.96);
 \node[align=center] at (2.325,1.62) {tower\\algebra};
 \draw[->,line width=0.45pt] (6.1,0.96) -- (8.8,0.96);
 \node at (7.45,1.62) {conjugate by $F_4$};
 \node at (7.45,0.3) {apply $E$};
 \node at (2.2,-0.5) {$a_j=f(s^jy)$};
 \node at (7.5,-0.5)
   {$\overline a=\tfrac14(a_0+a_1+a_2+a_3)$};
\end{tikzpicture}
\caption{Averaging a coefficient on a single tower. For illustration,
the shape is the interval $\{e,s,s^2,s^3\}$.}
\label{fig:tower-averaging}
\end{figure}

For a polynomial $x=\sum_{g\in S} f_gu_g \in C(X)\rtimes_\lambda G$ and $y\in B$, define
\begin{equation}\label{E:compression-matrix}
 X_x(y)=\bigl[f_{s_i s_j^{-1}}(s_i y)\bigr]_{i,j=1}^n.
\end{equation}
This is the compression to $\ell^2(S)$ of the regular representation
at $y$ on $\ell^2(G)$, given by
\[
 \pi_y(f)\delta_s=f(sy)\delta_s,\qquad
 \pi_y(u_g)\delta_s=\delta_{gs}.
\]
Hence $\|X_x(y)\|\le\|x\|$, and
\begin{equation}\label{E:matrix-trace}
 \tr_n(X_x(y))=\frac1n\sum_{s\in S}E(x)(sy).
\end{equation}
If $W\in\cU(n)$ is viewed as a constant matrix over $B$, the
expectation of a conjugated element satisfies
\begin{equation}\label{E:expectation-diagonal}
 E(\Phi_B(W)^*x\Phi_B(W))(s_jy)
   =(W^*X_x(y)W)_{jj}.
\end{equation}
To verify this, use the $C(X)$-bimodularity of $E$ to compress
to the $j$th level and then note that
\[
 E(1_Bu_{s_i}^*xu_{s_l}1_B)(y)
   =f_{s_i s_l^{-1}}(s_i y).
\]
Multiplication by the matrix entries of $W$ gives
\eqref{E:expectation-diagonal}.

\begin{prop}\label{P:averaging}
Let $G\curvearrowright X$ be an action of a countable discrete group on the Cantor set. Suppose that the action preserves a probability measure $\mu$ of full support and has balanced towers with respect
to $\mu$. Then the canonical expectation $E$ on $C(X)\rtimes_\lambda G$ has norm averaging with respect to $\tau=\mu\circ E$. The averaging unitaries can be chosen in the algebraic crossed product.
\end{prop}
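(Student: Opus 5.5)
To establish norm averaging, I will build a single unitary in the algebraic crossed product block by block over a balanced tower partition. Each block is a constant Fourier-type unitary supplied by Lemma~\ref{L:matrix}.

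First reduce to Fourier polynomials. Since $E$ is contractive and $\tau$ is a state, replacing each $x\in F$ by a polynomial $x'$ with $\|x-x'\|<\varepsilon/3$ costs at most $2\varepsilon/3$ in \eqref{E:NA}. So assume every $x\in F$ has the form $x=\sum_{g\in K}f_{x,g}u_g$ with a common finite $K\subseteq G$. Let $M=\max_{x\in F}\|x\|$ and $k=|F|$, and take $N$ from Lemma~\ref{L:matrix} for $k$, $M$ and $\eta=\varepsilon/3$. Apply balanced towers to the finite family $\mathcal F=\{E(x):x\in F\}=\{f_{x,e}\}$ with $\delta=\varepsilon/3$ and minimum height $N$. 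This gives a tower partition $\{(B_i,S_i)\}$.

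The main obstacle is that Lemma~\ref{L:matrix} produces a constant $W$, while the matrices $X_x(y)$ of \eqref{E:compression-matrix} vary with $y\in B_i$. I will handle this by refining each base $B_i$ into finitely many clopen pieces on which every relevant coefficient is nearly constant. Only finitely many functions $y\mapsto f_{x,g}(s y)$ with $s\in S_i$, $g\in K$ enter. These are uniformly continuous, so a clopen refinement of $B_i$ makes each of them vary by less than $\gamma$ on each piece. Then, on a piece $B'$ with a chosen point $y_0$, we have $\|X_x(y)-X_x(y_0)\|\le n\gamma\cdot|K|$-type bounds (crudely, at most $n^2\gamma$). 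Choosing $\gamma$ small relative to $n_i=|S_i|$, which is fixed before refining, makes this below $\varepsilon/3$ uniformly. Refining the base keeps the same shape and still gives a tower partition, and \eqref{E:balanced} is preserved since it holds pointwise in $y$. Apply Lemma~\ref{L:matrix} to $\{X_x(y_0)\}_{x\in F}$, whose norms are at most $M$ and whose sizes satisfy $n\ge N$, to get $W_{B'}\in\mathcal U(n)$.

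Set $w=\sum_{B'}\Phi_{B'}(W_{B'})$. The units of the tower algebras are the indicator functions of the tower supports, and these partition $X$. The ranges $\Phi_{B'}(M_n(C(B')))$ are mutually orthogonal, since their units are orthogonal projections in $C(X)$. Hence $w$ is a unitary, and it is a Fourier polynomial. Because $E$ is $C(X)$-bimodular, $E(w^*xw)$ restricted to the tower over $B'$ equals $E(\Phi_{B'}(W)^*x\Phi_{B'}(W))$: the cross terms $1_{T}E(\cdot)1_{T'}$ vanish, as $E$ takes values in the commutative algebra $C(X)$. By \eqref{E:expectation-diagonal}, at the point $s_jy$ this equals $(W^*X_x(y)W)_{jj}$.

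Finally, apply the triangle inequality in three steps. The entry $(W^*X_x(y)W)_{jj}$ is within $\varepsilon/3$ of $(W^*X_x(y_0)W)_{jj}$ by the continuity step. That entry is within $\varepsilon/3$ of $\tr_n(X_x(y_0))$ by Lemma~\ref{L:matrix}. By \eqref{E:matrix-trace} and \eqref{E:balanced}, $\tr_n(X_x(y_0))=\frac1n\sum_{s}E(x)(sy_0)$ is within $\varepsilon/3$ of $\mu(E(x))=\tau(x)$. Rescaling $\varepsilon$ to absorb the polynomial approximation gives \eqref{E:NA}.
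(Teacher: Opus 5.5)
Your proposal is correct and takes essentially the same route as the paper. Both arguments take a balanced tower partition, choose the unitaries from Lemma~\ref{L:matrix} locally constantly over a clopen refinement of the bases, assemble them into an algebraic unitary whose cross-tower terms vanish by $C(X)$-bimodularity of $E$, and finish with a three-term triangle inequality. The differences are only in the order of steps: you reduce to Fourier polynomials first and refine the bases by uniform continuity before fixing one sample point per piece, whereas the paper picks a unitary at every point, takes clopen neighbourhoods on which the matrices are nearly constant, extracts a finite partition by compactness, and approximates by polynomials at the end.
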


\begin{proof}
First let $F\subset A$ be a nonempty finite set of Fourier
polynomials, and fix $\varepsilon>0$. Put $\eta=\varepsilon/4$
and choose $M>\max_{x\in F}\|x\|$. Apply Lemma~\ref{L:matrix}
with $k=|F|$, norm bound $M$, and tolerance $\eta$, obtaining
a minimum matrix size $N$.

Choose a balanced tower partition $\{(B_i,S_i)\}_{i=1}^l$ for
the functions in the set $\{E(x):x\in F\}$, tolerance $\eta$, and minimum
height $N$. Enumerate $S_i=\{s_{i,1},\ldots,s_{i,n_i}\}$, and
write $X_{i,x}(y)$ for the matrix in
\eqref{E:compression-matrix} on this tower. Thus, for $y\in B_i$
and $x\in F$,
\begin{equation}\label{E:balanced-matrix-traces}
 \|X_{i,x}(y)\|\le\|x\|<M,
 \qquad
 |\tr_{n_i}(X_{i,x}(y))-\tau(x)|<\eta.
\end{equation}
The second inequality is precisely the balancing condition,
by \eqref{E:matrix-trace}. It remains to make every diagonal
entry close to this normalised trace, using unitaries that vary
locally constantly over the bases.

Fix $i$. For each $y_0\in B_i$, Lemma~\ref{L:matrix} provides
$W_{i,y_0}\in\cU(n_i)$ such that, for all $x\in F$, $1\le j\le n_i$, 
\[
 \left|(W_{i,y_0}^*X_{i,x}(y_0)W_{i,y_0})_{jj}
             -\tr_{n_i}(X_{i,x}(y_0))\right|<\eta.
\]
Each matrix field $X_{i,x}$ is continuous in norm, since its finitely
many entries are continuous functions of $y$. Since $F$ is finite
and $X$ has a clopen basis, there is a clopen neighborhood
$O_{i,y_0}\subseteq B_i$ of $y_0$ such that, for $y\in O_{i,y_0}, x\in F$,
\[
 \|X_{i,x}(y)-X_{i,x}(y_0)\|<\eta/2.
\]
The normalised trace and each diagonal-entry functional have
norm one. Hence, for every $y\in O_{i,y_0}$,
\begin{align*}
 &\left|(W_{i,y_0}^*X_{i,x}(y)W_{i,y_0})_{jj}
                -\tr_{n_i}(X_{i,x}(y))\right|\\*
 &\quad\le
 \left|(W_{i,y_0}^*X_{i,x}(y_0)W_{i,y_0})_{jj}
                -\tr_{n_i}(X_{i,x}(y_0))\right|
       +2\|X_{i,x}(y)-X_{i,x}(y_0)\|<2\eta.
\end{align*}

By compactness, finitely many of the sets $O_{i,y_0}$ cover
$B_i$. Taking successive differences gives a finite clopen
partition $B_i=\bigsqcup_a C_{i,a}$ subordinate to this cover;
discard empty parts. On each $C_{i,a}$ choose the constant
unitary $W_{i,a}$ associated to one covering neighborhood.
For every $y\in C_{i,a}$, $x\in F$, and $1\le j\le n_i$, we have
\begin{equation}\label{E:local-matrix-averaging}
 \left|(W_{i,a}^*X_{i,x}(y)W_{i,a})_{jj}
                -\tr_{n_i}(X_{i,x}(y))\right|<2\eta.
\end{equation}
The pairs $(C_{i,a},S_i)$ form a tower partition with the same heights
and balancing estimates as before. Thus no continuous choice
of the pointwise unitaries is required.

We now assemble the locally constant choices. For each refined
tower put
\[
 p_{i,a}=\sum_{j=1}^{n_i}1_{s_{i,j}C_{i,a}},\qquad
 w_{i,a}=\sum_{p,q=1}^{n_i}(W_{i,a})_{pq}
               u_{s_{i,p}}1_{C_{i,a}}u_{s_{i,q}}^*.
\]
The tower-algebra embedding \eqref{E:tower-algebra} gives
\[
 w_{i,a}=p_{i,a}w_{i,a}p_{i,a},\qquad
 w_{i,a}^*w_{i,a}=w_{i,a}w_{i,a}^*=p_{i,a}.
\]
The projections $p_{i,a}$ are pairwise orthogonal and sum to
$1$, so $w=\sum_{i,a}w_{i,a}$ satisfies $w^*w=ww^*=1$.
Moreover, every summand in its defining formula is a Fourier
monomial, since
\[
 u_{s_{i,p}}1_{C_{i,a}}u_{s_{i,q}}^*
     =1_{s_{i,p}C_{i,a}}u_{s_{i,p}s_{i,q}^{-1}}.
\]
Thus $w$ belongs to the algebraic crossed product.

Although $x$ can couple distinct towers, their cross terms
vanish after applying $E$. Indeed, for $(i,a)\ne(h,b)$,
$C(X)$-bimodularity gives
\[
 E(w_{i,a}^*xw_{h,b})
   =p_{i,a}E(w_{i,a}^*xw_{h,b})p_{h,b}=0.
\]
Each remaining term is supported on its own tower. Consequently,
\eqref{E:expectation-diagonal} gives, for $y\in C_{i,a}$,
\[
 E(w^*xw)(s_{i,j}y)
       =(W_{i,a}^*X_{i,x}(y)W_{i,a})_{jj}.
\]
Combining \eqref{E:balanced-matrix-traces} and
\eqref{E:local-matrix-averaging}, we obtain
\begin{align*}
 |E(w^*xw)(s_{i,j}y)-\tau(x)|
 &\le |(W_{i,a}^*X_{i,x}(y)W_{i,a})_{jj}
                       -\tr_{n_i}(X_{i,x}(y))|\\*
 &\quad+|\tr_{n_i}(X_{i,x}(y))-\tau(x)|<3\eta.
\end{align*}
The levels cover $X$, so
$\|E(w^*xw)-\tau(x)1\|\le3\eta<\varepsilon$ for every
$x\in F$.

Finally, let $F\subset A$ be any finite set. For each $x\in F$
choose a Fourier polynomial $a_x$ with
$\|x-a_x\|<\varepsilon/4$, and apply the preceding argument
to $\{a_x:x\in F\}$ with tolerance $\varepsilon/2$. The resulting
algebraic unitary $w$ satisfies
\[
 \|E(w^*xw)-\tau(x)1\|
 \le2\|x-a_x\|+\|E(w^*a_xw)-\tau(a_x)1\|<\varepsilon,
\]
using contractivity of $E$ and $\tau$. This proves norm averaging
with algebraic averaging unitaries.
\end{proof}

\begin{cor}\label{C:comparison-NA}
Let $G\curvearrowright X$ be a strictly ergodic action of a countable discrete group on the Cantor set with dynamical comparison. Then the canonical conditional expectation on $C(X)\rtimes_\lambda G$ has norm averaging for its canonical expectation and trace.
\end{cor}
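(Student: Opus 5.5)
This corollary should follow by chaining Proposition~\ref{P:comparison} with Proposition~\ref{P:averaging}. Let $\mu$ be the unique invariant probability measure of the strictly ergodic action $G\curvearrowright X$.

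\emph{Step 1: verify the hypotheses of Proposition~\ref{P:averaging}.} The action preserves $\mu$ by definition. For full support: by minimality every nonempty open set $O$ satisfies $X=\bigcup_{g\in G} gO$. By compactness, finitely many translates already cover $X$. Invariance of $\mu$ then forces $\mu(O)>0$. This is also recorded in the preliminaries, where invariant measures of minimal Cantor actions are noted to have full support.

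\emph{Step 2: obtain balanced towers.} Since the action is strictly ergodic and has dynamical comparison, the direction $[\Rightarrow]$ of Proposition~\ref{P:comparison} gives balanced towers with respect to $\mu$.

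\emph{Step 3: conclude.} Proposition~\ref{P:averaging} now says that the canonical expectation $E$ on $C(X)\rtimes_\lambda G$ has norm averaging with respect to $\tau=\mu\circ E$. This $\tau$ is the canonical trace. It is a faithful tracial state, as Definition~\ref{D:NA} requires, because $\mu$ has full support and $E$ is faithful.

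I expect no real obstacle. The only point needing care is the full-support hypothesis in Step 1, since it is what makes $\tau$ faithful.
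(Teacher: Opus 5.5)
Your proposal is correct and is exactly the paper's argument; the paper states the corollary without a separate proof. The direction $[\Rightarrow]$ of Proposition~\ref{P:comparison} gives balanced towers for the unique invariant measure $\mu$, which has full support by minimality, and Proposition~\ref{P:averaging} then gives norm averaging for $E$ and $\tau=\mu\circ E$; your compactness argument for full support and your remark on faithfulness of $\tau$ are both fine.
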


\section{Norm averaging and PHP}\label{S:group-PHP}

We now prove that norm averaging and PHP together imply complete
selflessness. The argument adapts Ozawa's proof for reduced group
C$^*$-algebras \cite[Theorem 14]{Oza25}, with norm averaging
controlling the identity coefficient.

The following estimates are the group-geometric part of the proof.
They include control on finite sets of group coordinates, which will
allow us to conjugate the construction by the averaging unitary.

\begin{lemma}\label{L:group-PHP-operators}
Let $G$ be a countable group with property PHP and $G\curvearrowright D$ and action on a unital C$^*$-algebra $D$ with invariant faithful trace $\sigma$, with $A=D\rtimes_\lambda G$ represented regularly on $\mathcal{H}=\ell^2(G)\otimes L^2(D,\sigma)$, and put $\Omega=\delta_e\otimes\widehat1$. Given finite $S\subseteq G\setminus\{e\}$ and $\eta>0$, for all sufficiently large $n$ there is $T\in B(\mathcal{H})$ such that
\begin{align}
 &\|T\|\le1,\qquad \|T^*T-1\|\le\eta/(2\sqrt n),
       \label{E:T-isometry}\\*
 &T^*du_gT=0\quad( d\in D, g\in S,
       \label{E:T-kills}\\*
 &\operatorname{dist}(T+T^*,A)\le2\eta,
       \label{E:T-symmetric}\\*
 &\|R_KT\|^2\le |K|/n \quad K\subseteq G\text{ finite},
       \label{E:T-vacuum}
\end{align}
where $R_K$ projects onto $\ell^2(K)\otimes L^2(D,\sigma)$.
\end{lemma}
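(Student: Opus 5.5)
The plan is to transplant Ozawa's operator from the proof of \cite[Theorem 14]{Oza25} to $\mathcal H=\ell^2(G)\otimes L^2(D,\sigma)$. The point is that every ingredient is either a translation $u_t$ or a multiplication projection $P_C$ onto $\ell^2(C)\otimes L^2(D,\sigma)$. The projections $P_C$ commute with the left action of $D$, because $D$ acts diagonally over $G$. Consequently $D$-coefficients never interact with the combinatorics.

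\emph{Construction.} Apply Definition~\ref{D:group-PHP} with the finite set $F=S\cup S^{-1}\cup\{e\}$ and with $\eta/2$ in place of $\eta$. For each large $n$ this gives $t_i$ and $C_i\subseteq D_i$. Put
\[
 T_i=P_{C_i}u_{t_i}+u_{t_i}^*P_{G\setminus D_i},\qquad T=n^{-1/2}\sum_{i=1}^n T_i .
\]
Each $T_i$ has range in $\ell^2\bigl(C_i\sqcup t_i^{-1}(G\setminus D_i)\bigr)\otimes L^2(D)$. For different $i$ these sets are pairwise disjoint, as the case $a=b=e$ of PHP shows.

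\emph{Approximate isometry.} By the disjointness of ranges, $T^*T=n^{-1}\sum_i T_i^*T_i$. Within a single $T_i$ the two summands also have disjoint ranges, so
\[
 T_i^*T_i=P_{t_i^{-1}C_i}+P_{G\setminus D_i}=1-P_{t_i^{-1}(G\setminus C_i)}+P_{G\setminus D_i}.
\]
Averaging over $i$ and using \eqref{E:group-PHP-overlap} with constant $\eta/2$ gives
\[
 \|T^*T-1\|\le n^{-1}\cdot(\eta/2)\sqrt n=\eta/(2\sqrt n).
\]
To get $\|T\|\le1$ exactly, I would replace $T$ by $(1+\eta/(2\sqrt n))^{-1/2}T$. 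This perturbs $T$ by $O(\eta/\sqrt n)$, which is harmless for the remaining estimates once $n$ is large; alternatively one absorbs it into the constants.

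\emph{Killing.} For \eqref{E:T-kills}, note that $du_g$ with $g\in S$ maps the range of $T_j$ into $\ell^2\bigl(gC_j\sqcup gt_j^{-1}(G\setminus D_j)\bigr)\otimes L^2(D)$. By PHP with $a=g$ and $b=e$ (and symmetrically), this set is disjoint from the support of the range of every $T_i$. Hence $T_i^*du_gT_j=0$ for all $i,j$.

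\emph{Vacuum bound.} For \eqref{E:T-vacuum} I would decompose $R_K=\sum_{h\in K}P_{\{h\}}$. A point $h$ lies in at most one of the sets $C_i$, $t_i^{-1}(G\setminus D_i)$. Hence $P_{\{h\}}T=n^{-1/2}P_{\{h\}}X$ with $X$ a single partial isometry, so $\|P_{\{h\}}T\|^2\le1/n$. Since the $P_{\{h\}}$ are orthogonal,
\[
 \|R_KT\|^2=\Bigl\|\sum_{h\in K}T^*P_{\{h\}}T\Bigr\|\le |K|/n .
\]
If the rescaling above is needed, it only improves this bound.

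\emph{Symmetric part (main obstacle).} The main obstacle is \eqref{E:T-symmetric}, the estimate where Pisier's row/column trick is used. Since $P_{C_i}+P_{G\setminus D_i}=1-P_{D_i\setminus C_i}$,
\[
 T+T^*=n^{-1/2}\sum_i(u_{t_i}+u_{t_i}^*)-n^{-1/2}\sum_i\bigl(P_{D_i\setminus C_i}u_{t_i}+u_{t_i}^*P_{D_i\setminus C_i}\bigr).
\]
The first sum lies in $A$. For the error I would write $\sum_iP_{E_i}u_{t_i}$, with $E_i=D_i\setminus C_i$, as a row times a column. This gives
\[
 \Bigl\|\sum_iP_{E_i}u_{t_i}\Bigr\|\le\Bigl\|\sum_iP_{E_i}\Bigr\|^{1/2}\Bigl\|\sum_iu_{t_i}^*P_{E_i}u_{t_i}\Bigr\|^{1/2}.
\]
Here $\sum_iP_{E_i}\le\sum_i1_{D_i}$ and $\sum_iu_{t_i}^*P_{E_i}u_{t_i}=\sum_iP_{t_i^{-1}E_i}\le\sum_i1_{t_i^{-1}(G\setminus C_i)}$. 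By \eqref{E:group-PHP-overlap} both norms are at most $(\eta/2)\sqrt n$, so the error is at most $n^{-1/2}\cdot2\cdot(\eta/2)\sqrt n=\eta\le2\eta$. I expect the only real care to be in verifying this factorisation on the tensor product $\mathcal H$. Everything is diagonal in the $L^2(D)$ factor, so it should reduce to the scalar computation. Finally I would track the constants through the rescaling so that all four bounds hold simultaneously.
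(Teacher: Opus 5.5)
The approximate-isometry step fails as written, because your normalisation is off by a factor of $\sqrt2$. Your own identity gives
\[
T_i^*T_i=P_{t_i^{-1}C_i}+P_{G\setminus D_i}=2-P_{D_i}-P_{t_i^{-1}(G\setminus C_i)}.
\]
With $T=n^{-1/2}\sum_iT_i$ this yields
\[
T^*T=2-n^{-1}\sum_i\bigl(P_{D_i}+P_{t_i^{-1}(G\setminus C_i)}\bigr).
\]
So $\|T^*T-1\|$ is close to $1$ and $\|T\|$ is close to $\sqrt2$. The PHP overlap bound controls $\sum_i1_{D_i}$, not $\sum_i1_{G\setminus D_i}$, but you treated $P_{G\setminus D_i}$ as a small error term. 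Your proposed rescaling by $(1+\eta/(2\sqrt n))^{-1/2}$ cannot repair a factor of $2$.

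The fix is to normalise by $(2n)^{-1/2}$, as the paper does. Then $T^*T=1-(2n)^{-1}\sum_i(P_{D_i}+P_{t_i^{-1}(G\setminus C_i)})\le1$. This gives $\|T\|\le1$ automatically, with no rescaling. It also gives $\|T^*T-1\|\le\eta/(2\sqrt n)$ already with parameter $\eta$ in PHP. The other bounds only improve: the symmetric error is at most $\sqrt2\,\eta\le2\eta$, and the vacuum bound becomes $|K|/(2n)$.

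With that correction your argument is essentially the paper's. Since $u_{t_i}^*P_{G\setminus D_i}=P_{t_i^{-1}(G\setminus D_i)}u_{t_i}^*$, your $T_i$ is exactly the paper's summand $P_iu_{t_i}+P_i'u_{t_i}^*$, and your killing argument is the same. For the symmetric term you use the row--column inequality where the paper uses Schur's test. That works equally well, and it needs no reduction to scalar matrices, since $\|\sum_ia_ib_i\|\le\|\sum_ia_ia_i^*\|^{1/2}\|\sum_ib_i^*b_i\|^{1/2}$ holds in any C$^*$-algebra.
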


\begin{proof}
Apply PHP with $F=\{e\}\cup S$ and parameter $\eta$.
For subsets of $G$, use their diagonal projections on
$\ell^2(G)$, tensored with $1$. Put
\[
 P_i=1_{C_i},\quad Q_i=1_{D_i},\quad
 P_i'=u_{t_i}^*(1-Q_i)u_{t_i},\quad
 Q_i'=u_{t_i}^*(1-P_i)u_{t_i}.
\]
The $2n$ projections $P_i,P_i'$ are mutually orthogonal, and
\eqref{E:group-PHP-overlap} says that
$\|\sum_i(Q_i+Q_i')\|\le\eta\sqrt n$. Define
\begin{equation}\label{E:T-definition}
 T=(2n)^{-1/2}\sum_{i=1}^n
       (P_i u_{t_i}+P_i'u_{t_i}^*).
\end{equation}
Orthogonality of the range projections gives
\[
 T^*T=(2n)^{-1}\sum_i\bigl(u_{t_i}^*P_iu_{t_i}
                  +u_{t_i}P_i'u_{t_i}^*\bigr)
      =1-(2n)^{-1}\sum_i(Q_i+Q_i'),
\]
proving \eqref{E:T-isometry}.

The representation of $D$ is diagonal in the group coordinate,
so it commutes with all these projections. The translated-support
disjointness in PHP implies
$Pdu_g\widetilde P=0$ for $g\in S$ and any two of the range
projections $P,\widetilde P\in\{P_i,P_i'\}$. Expanding
\eqref{E:T-definition} proves \eqref{E:T-kills}.

Set $L=\sum_i(Q_i-P_i)u_{t_i}$. Its scalar matrix on
$\ell^2(G)$ has nonnegative entries. Its row sums are
bounded by $\|\sum_iQ_i\|$, and its column sums by
$\|\sum_iQ_i'\|$. Schur's test therefore gives
$\|L\|\le\eta\sqrt n$, also after tensoring with $1$.
Since
\[
 T+T^*=(2n)^{-1/2}\sum_i(u_{t_i}+u_{t_i}^*)
              -(2n)^{-1/2}(L+L^*),
\]
we obtain \eqref{E:T-symmetric}.

Finally, at most $|K|$ of the mutually disjoint supports of
$P_i,P_i'$ meet $K$. Their compressed ranges remain orthogonal.
For $\xi\in\mathcal{H}$, expanding $R_KT\xi$ in these orthogonal ranges
therefore gives
\[
 \|R_KT\xi\|^2\le\frac{|K|}{2n}\|\xi\|^2
                         \le\frac{|K|}{n}\|\xi\|^2,
\]
which proves \eqref{E:T-vacuum}. This argument does not require
$R_K$ to have finite Hilbert-space rank.
\end{proof}

\begin{thm}\label{T:group-PHP-NA}
Let $G$ be a countable group with property PHP and $G\curvearrowright D$ an action on a separable unital C$^*$-algebra $D$ with invariant faithful trace $\sigma$. If the canonical expectation $E$ on $A=D\rtimes_\lambda G$ has norm averaging with respect to $\tau=\sigma\circ E$, then $(A,\tau)$ is completely selfless.
\end{thm}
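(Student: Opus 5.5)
We will verify the hypotheses of Theorem~\ref{T:Ozawa-isometry} with $B=A*(Q,\psi)$ replaced by a concrete ambient algebra. We follow Ozawa's proof of \cite[Theorem 14]{Oza25} and use the operators of Lemma~\ref{L:group-PHP-operators} on $\mathcal H$. Concretely, we aim to produce, for each finite $F\subset A$ and $\varepsilon>0$, an operator $T_{F,\varepsilon}\in B(\mathcal H)$ with four properties:
\begin{itemize}
\item[(a)] $\|T^*T-1\|<\varepsilon$;
\item[(b)] $\|T^*xT-\tau(x)1\|<\varepsilon$ for $x\in F$;
\item[(c)] $\operatorname{dist}(T+T^*,A)<\varepsilon$;
\item[(d)] the vector states $\langle xTT^*x^*\Omega,\Omega\rangle$-type quantities tend to zero.
\end{itemize}
Passing along an ultrafilter over the directed set of pairs $(F,\varepsilon)$ then yields the required approximate isometry. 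We correct it to a genuine isometry by polar decomposition, using (a). We take $B$ to be the C$^*$-algebra generated by $A$ and the $T$'s, with the vector state from $\Omega$. That $\psi^{\cU}(xTT^*x^*)=0$ should follow from \eqref{E:T-vacuum}, since $T^*x^*\Omega$ is concentrated on a finite set of group coordinates up to small error. Separability of $D$ lets us index by a countable dense family, so a sequential ultrafilter suffices.

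The key new step is (b). First we approximate $x\in F$ by Fourier polynomials. Next we apply norm averaging to the finite set $F$ with tolerance $\varepsilon/2$, obtaining a unitary $w$, which by Proposition~\ref{P:averaging} may be taken algebraic. Write $w^*xw=E(w^*xw)+\sum_{g\in S}d_gu_g$ with $S\subseteq G\setminus\{e\}$ finite, after approximating. Let $T_0$ be the operator of Lemma~\ref{L:group-PHP-operators} for this $S$ and large $n$, and set $T=wT_0w^*$. Then
\[
T^*xT=wT_0^*(w^*xw)T_0w^*.
\]
By \eqref{E:T-kills} the $d_gu_g$ terms vanish. The identity-coefficient term $E(w^*xw)$ is within $\varepsilon/2$ of $\tau(x)1$, so it contributes approximately $\tau(x)T_0^*T_0\approx\tau(x)1$ by \eqref{E:T-isometry}. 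Conjugating by the unitary $w$ preserves this, giving (b). Property (a) holds since $w$ is unitary. For (c), we have $T+T^*=w(T_0+T_0^*)w^*$, which is within $2\eta$ of $wAw^*=A$ by \eqref{E:T-symmetric}.

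The main obstacle is (d), the vacuum estimate after conjugating by $w$. We need
\[
\|T^*x^*\Omega\|=\|T_0^*w^*x^*\Omega\|\to0 .
\]
The vector $w^*x^*\Omega$ depends on $w$, hence on $F$, but $w$ is algebraic with finite group support $K_w$. So $w^*x^*\Omega$ lies, up to the polynomial approximation error of $x$, in $R_K\mathcal H$ for a finite $K$ determined before choosing $n$. Then \eqref{E:T-vacuum}, applied to the adjoint $T_0^*R_K=(R_KT_0)^*$, gives the bound $\|x\|\sqrt{|K|/n}$. This tends to zero because $n$ is chosen after $K$. This ordering of choices is the crux of the argument: first $F$ and $\varepsilon$, then $w$, then the support set $S$ and $K$, and only then $n$ via PHP. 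Finally, complete selflessness (and not just selflessness) follows directly from Theorem~\ref{T:Ozawa-isometry}, since its conclusion is complete selflessness once the isometry lies in the ultrapower of an ambient state-preserving extension.
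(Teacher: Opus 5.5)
Your proposal is essentially the paper's proof: conjugate the operator $T_0$ of Lemma~\ref{L:group-PHP-operators} by the norm-averaging unitary $w$, use \eqref{E:T-kills} to kill the nonidentity Fourier terms of a polynomial approximant of $w^*xw$, and fix $S$ and the finite set $K$ after choosing $w$ but before choosing $n$, so that \eqref{E:T-vacuum} bounds $\|T_0^*w^*x^*\Omega\|$ by about $\|x\|\sqrt{|K|/n}$. One small correction: you cannot cite Proposition~\ref{P:averaging} to make $w$ algebraic, since Theorem~\ref{T:group-PHP-NA} allows a general $D$ and that proposition only concerns Cantor actions with balanced towers; this does not matter, because you can approximate $w^*xw$ directly by a Fourier polynomial and choose $K$ with $\|(1-R_K)w^*x^*\Omega\|<\delta$ (possible for any vector because $R_K\to1$ strongly), exactly as the paper does.
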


\begin{proof}
Use the representation and trace vector of
Lemma~\ref{L:group-PHP-operators}, and let
$\psi(x)=\langle x\Omega,\Omega\rangle$ on $B(\mathcal{H})$.
Then $\psi|_A=\tau$. Fix finite sets $F,L\subset A$ and
$\delta>0$. By norm averaging, choose $w\in\cU(A)$ with
\[
 \|E(w^*xw)-\tau(x)1\|<\delta 
\]
for all $x\in F$. For each $x\in F$, choose a Fourier polynomial $y_x$ with
$\|y_x-w^*xw\|<\delta$. Its identity coefficient is within
$2\delta$ of $\tau(x)1$. Let $S$ contain all the nonidentity
Fourier supports of the $y_x$.

For the finitely many vectors $w^*a^*\Omega$, $a\in L$, choose
finite $K\subseteq G$ so that for all $a\in L$,
\begin{equation}\label{E:vector-tail}
 \|(1-R_K)w^*a^*\Omega\|<\delta.
\end{equation}
Use Lemma~\ref{L:group-PHP-operators} with this $S$, parameter $\eta$,
and a sufficiently large $n$, and set $V=wTw^*$. Then
\[
 \|V^*V-1\|\le\eta/(2\sqrt n),\qquad
 \operatorname{dist}(V+V^*,A)\le2\eta.
\]
By \eqref{E:T-kills}, $T^*y_xT=T^*E(y_x)T$, and hence
\begin{equation}\label{E:scalar-compression}
 \|V^*xV-\tau(x)1\|
 \le3\delta+|\tau(x)|\,\eta/(2\sqrt n)
 \qquad(x\in F).
\end{equation}
The estimate involving the state is also needed. Equations
\eqref{E:T-vacuum} and \eqref{E:vector-tail} give, for $a\in L$,
\begin{align*}
 \psi(aVV^*a^*)^{1/2}
 &=\|V^*a^*\Omega\|=\|T^*w^*a^*\Omega\|\\
 &\le\|T^*R_K\|\|a\|+\delta
 \le\sqrt{|K|/n}\,\|a\|+\delta. 
\end{align*}
By separability, let $F$ and $L$ exhaust countable dense subsets,
take $\delta,\eta\to0$, and choose $n\to\infty$ so that all the
errors tend to zero. The resulting contractions $V_m$ define an
isometry $V\in B(\mathcal{H})^{\cU}$ with $V+V^*\in A^{\cU}$.
The compression and state estimates give $V^*xV=\tau(x)1$ and
$\psi^{\cU}(xVV^*x^*)=0$ first on the dense subsets, and then on
all of $A$ by continuity. Theorem~\ref{T:Ozawa-isometry} applies.
\end{proof}

\section{Selfless crossed products and regularity}\label{S:applications}

\subsection{Real rank zero}

In order to prove real rank zero, we use the following observation, and application of R{\o}rdam's criterion. The author thanks Hannes Thiel for pointing out this argument.

\begin{prop}\label{P:rr0}
Let $A$ be a simple unital C$^*$-algebra with stable rank one
and strict comparison. Suppose that $A$ has a tracial state
$\tau$ which is its only quasitracial state. If $A$
contains a unital copy of $C(X)$ for a Cantor space $X$, then
$A$ has real rank zero.
\end{prop}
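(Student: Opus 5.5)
The plan is to verify Rørdam's criterion for real rank zero: a unital simple C$^*$-algebra with stable rank one and strict comparison has real rank zero provided the image of $K_0(A)$ under the state induced by its unique (quasi)trace is dense in $\Rb$. More precisely, I would use the standard formulation: for unital simple $A$ with stable rank one, strict comparison and unique quasitrace $\tau$, real rank zero follows once projections separate in the sense that $\tau(P(A))$ is dense in $[0,1]$. A direct route, avoiding $K$-theory, is to show that every hereditary subalgebra $\overline{aAa}$ ($a\ge0$ nonzero) contains a nonzero projection and that positive elements can be approximated by ones with finite spectrum, using comparison. I would follow the classical Blackadar–Handelman/Rørdam argument as the paper hints: density of $\tau_*(K_0(A))$ plus strict comparison plus stable rank one gives real rank zero.

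The key steps, in order. First, the unital inclusion $C(X)\subseteq A$ with $X$ the Cantor set restricts $\tau$ to a state on $C(X)$, i.e.\ a probability measure $\nu$ on $X$. Since $A$ is simple and $\tau$ is faithful, $\nu$ has full support; moreover $\nu$ has no atoms. For the latter, if $\nu(\{x\})=c>0$, then clopen neighbourhoods $U\ni x$ give projections $1_U$ with $\tau(1_U)\ge c$ decreasing to $c$. I expect to avoid this issue by instead using that $X$ is perfect, so each nonempty clopen set splits into two nonempty clopen sets. Second, I obtain density of $\{\tau(p): p \text{ projection in } A\}$ in $[0,1]$. For this, given a nonzero positive $a\in A$, I find a nonzero projection $p\in C(X)$ with $\tau(p)<d_\tau(a)$. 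This uses strict comparison: it gives $p\precsim a$, and with stable rank one, $p$ is Murray–von Neumann equivalent to a projection in $\overline{aAa}$. Hence every hereditary subalgebra has a projection, and projections of small trace exist.

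To get small trace I need $\inf\{\tau(1_U): U\ne\emptyset \text{ clopen}\}=0$. This is where atoms matter. If $\nu$ had an atom at $x$, then $\tau(1_U)\ge c$ for all $U\ni x$, and I would instead argue as follows. Take $U$ a small clopen neighbourhood of $x$ and split it into $U_1\sqcup U_2$ with $x\in U_1$. Then $U_2$ is a nonzero clopen set of arbitrarily small measure, since $\nu(U)-c\to0$. So in all cases there are nonzero projections of arbitrarily small trace. Then $1_U A 1_U$ is again simple with the same properties, and I iterate. Halving via comparison, I write $p\sim p_1+p_2$ with $p_1,p_2$ of nearly half the trace. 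Combining this with sums of orthogonal small projections produced by comparison inside the complement gives density of the trace values.

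Third, I conclude real rank zero from Rørdam's criterion (unital simple, stable rank one, strict comparison, unique quasitrace, dense $K_0$-image), citing it as the paper indicates. The main obstacle is the second step: producing projections of arbitrarily small and then densely distributed trace. It also requires a careful use of strict comparison together with stable rank one, to upgrade Cuntz subequivalence of projections to Murray–von Neumann subequivalence with orthogonal placement. The most delicate point is realising sums $\sum p_j$ orthogonally inside a complement projection, which I would handle by repeatedly applying comparison to $1-\sum_{j<k}p_j$.
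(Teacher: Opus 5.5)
Your proposal is correct and follows the paper's route: both feed the projections $1_U$, $U\subseteq X$ clopen, into R{\o}rdam's real rank zero criterion \cite[Proposition 7.1]{Ror04}. The paper gets the density much more cheaply, however: partitioning $X$ into $n$ nonempty clopen sets gives, by pigeonhole and faithfulness of $\tau$, a projection with $0<\tau(p)\le 1/n$, and any subgroup of $\Rb$ with arbitrarily small positive elements is dense, so your treatment of atoms, halving and orthogonal placement is unnecessary. You should also make explicit that R{\o}rdam's published criterion assumes exactness and is phrased via almost unperforation of $W(A)$. The paper notes that exactness is used only to identify quasitraces with traces, which your unique-quasitrace hypothesis supplies, and that strict comparison is equivalent to almost unperforation of $W(A)$ in this setting.
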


\begin{proof}
The trace $\tau$ is faithful by simplicity. Partition $X$ into
$n$ nonempty clopen sets. Their characteristic functions are
nonzero orthogonal projections summing to $1$, so one of them,
say $p_n$, satisfies $0<\tau(p_n)\le1/n$. Hence the subgroup
$\tau_*(K_0(A))\subseteq\Rb$ contains arbitrarily small positive
elements and is dense. Since $T(A)=\{\tau\}$, this says that
the range of $K_0(A)\to\operatorname{Aff}(T(A))$ is dense. Strict comparison is equivalent to almost unperforation of $W(A)$ in this setting; see \cite[Section 4]{Ror04}.
The proof of R{\o}rdam's real rank zero criterion
\cite[Proposition 7.1]{Ror04} therefore applies. Its exactness
hypothesis is used only to identify quasitraces with traces,
which is already part of our hypothesis.
\end{proof}

\subsection{The main results}

\begin{proof}[Proof of Theorem~\ref{T:PHP-main}]
Corollary~\ref{C:comparison-NA} gives norm averaging, and Theorem~\ref{T:group-PHP-NA} gives complete
selflessness. By \cite[Theorem 3.1]{Rob25}, the crossed product has
stable rank one and strict comparison, and the canonical trace is
its only quasitracial state. Proposition~\ref{P:rr0} gives real
rank zero.
\end{proof}

\begin{proof}[Proof of Corollary~\ref{T:main}]
The nonamenable free product $G$ has property PHP, as seen by its Bass--Serre action
gives a topologically free extreme boundary, so \cite[Proposition 15]{Oza25} applies. Applying Theorem~\ref{T:PHP-main} proves the theorem.
\end{proof}

\begin{proof}[Proof of Corollaries~\ref{T:amenable} and~\ref{T:free}]
For Corollary~\ref{T:amenable}, the restricted action has comparison
by \cite[Theorem A]{GlaLiu26}. Proposition~\ref{P:subgroup-comparison}
gives strict ergodicity and comparison for the $G$-action, so
Theorem~\ref{T:PHP-main} applies.

For Corollary~\ref{T:free}, use the cyclic subgroup generated by
the specified element. Its action has comparison by
\cite[Theorem 6.33]{DowZha23}. Apply
Proposition~\ref{P:subgroup-comparison} and Theorem~\ref{T:PHP-main},
using PHP for $F_d$. Every action in $\Astar(F_d,X)$ has a
strictly ergodic generator, giving the last assertion.
\end{proof}

\begin{rem}
The class $\operatorname{WA}(F_d,X)$ from \cite{BelGefKer25}
need not have a strictly ergodic generator. In fact, its generator
actions generically factor onto the trivial action on the Cantor
set \cite[Proposition 8.6]{BelGefKer25}. Corollary~\ref{T:free}
therefore does not apply to that class as a whole.
\end{rem}

\begin{rem}
Recently Boldrini and Prasad constructed the first examples of minimal topologically free actions without dynamical comparison \cite{BolPra26}. These are actions of $F_\infty$ on the Cantor set which can be chosen to preserve no measure or at least one measure. We do not know whether their examples can be chosen to be strictly ergodic, though suspect that this is possible.
\end{rem}

\begin{question}
	Does there exist a strictly ergodic action $G\curvearrowright X$ on the Cantor set which fails to have (dynamical) comparison, but for which $C(X)\rtimes_\lambda G$ is (completely) selfless?
\end{question}

\end{document}